\newtheorem{theorem}{Theorem}
\newtheorem{lemma}{Lemma}
\newtheorem{proposition}{Proposition}
\theoremstyle{definition}
\newtheorem{definition}{Definition}
\newtheorem{remark}{Remark}
\newcommand{\beq}{\begin{equation}}
\newcommand{\eeq}{\end{equation}}
\newcommand{\beqs}{\begin{eqnarray*}}
\newcommand{\eeqs}{\end{eqnarray*}}
\newcommand{\beqn}{\begin{eqnarray}}
\newcommand{\eeqn}{\end{eqnarray}}
\newcommand{\beqa}{\begin{array}}
\newcommand{\eeqa}{\end{array}}
\numberwithin{equation}{section}
\numberwithin{theorem}{section}
\numberwithin{lemma}{section}
\numberwithin{remark}{section}
\numberwithin{assumption}{section}
\numberwithin{definition}{section}
\numberwithin{fact}{section}
\numberwithin{question}{section}
\numberwithin{proposition}{section}
\begin{document}

\title{$C^{0}$ estimates for Hessian quotient equations on HKT manifolds}

\author{Li Chen}
\address{Faculty of Mathematics and Statistics, Hubei Key Laboratory of Applied
Mathematics, Hubei University, Wuhan 430062, P.R. China}
\email{chenli@hubu.edu.cn}


\date{}

\begin{abstract}
We show the $C^0$ estimate for solutions to
Hessian quotient equations on hyperK\"ahler with torsion manifolds without any additional assumption on its
hypercomplex structure by
a clever use of the cone condition directly.
\end{abstract}

\keywords{Hessian quotient equations; $C^{0}$ estimate; HKT manifolds.}

\subjclass[2010]{
32W50, 53C55.
}

\maketitle
\vskip4ex

\section{Introduction}

Let $(M, I, J, K)$ be a compact hyperK\"ahler with torsion, later abbreviated as HKT, manifold
and $g$ be the hyperhermitian metric of $M$. We denote by $\Omega$ the associated HKT
form with respect to $g$. Fix a q-real smooth closed $(2, 0)$-form $\Omega_0$ on $M$, we introduce a
new q-real $(2,0)$ form for any smooth function $u: M\rightarrow \mathbb{R}$
\begin{eqnarray*}
\Omega_u:=\Omega+\partial\partial_J u.
\end{eqnarray*}

We consider the following Hessian quotient equation on $(M, \Omega)$
\begin{eqnarray}\label{K-eq}
\Omega^{k}_{u}\wedge \Omega^{n-k}=e^F\Omega^{l}_{u}\wedge \Omega^{n-l}, \quad 0\leq l<k\leq n,
\end{eqnarray}
where $F$ is a real smooth functions on $M$.
For $k=n$ and $l=0$,
the equation \eqref{K-eq} is just the quaternionic Monge-Amp\`ere equation associated to
the quaternionic Calabi conjecture which was introduced by Alesker-Verbitsky \cite{Ale10}
formulated in analogy with the famous complex Calabi conjecture solved by Yau \cite{Yau78}.
Later, M. Verbitsky \cite{Ver09} has found
a geometric interpretation of this quaternionic Monge-Amp\`ere equation.

Let us now give an overview of the advances towards proving the quaternionic Calabi conjecture.
Until now, this conjecture was solved by Alesker in \cite{Ale13} on the HKT manifold with a flat
hyperK\"ahler metric, by Gentili-Vezzonion \cite{Gen22} $8$-dimensional $2$-step nilmanifolds $M$
with an abelian hypercomplex structure and by Dinew-Sroka \cite{Di21}
for compact hyperK\"ahler manifolds. But for compact HKT manifolds, this conjecture is still open.

The nature strategy to face this problem is, of course, to use the continuity method
for which a priori estimates are crucial. Here, the $C^0$ estimate is
a very important part in solving this conjecture. Alesker-Verbitsky \cite{Ale10}
obtained the $C^0$ estimate in the case when the canonical bundle is trivial by
repeating the classical Moser iteration method used by Yau in \cite{Yau78}.
This bound was shown to hold in \cite{Ale13-1} when the hypercomplex structure
is locally flat by using the method of B{\l}ocki from \cite{Blo05-1}.
However, the $C^0$ estimate can be established
in more general settings now. Alesker-Shelukhin \cite{Ale17} obtained the $C^0$ estimate
on a compact HKT manifold
without any additional assumption on its
HKT structure, following the scheme of B{\l}ocki \cite{Blo05-1}.
Recently, it is Sroka \cite{Sro21}, who provided a simpler proof, by
using an analogous approach of Cherrier \cite{Che87}, Tosatti-Weinkove \cite{Tos10}, or
Zhang \cite{Zhang17} on hermitian manifolds.

For $k=n$ and $l=n-1$,
the equation \eqref{K-eq} is analogous to the $J$-equation on K\"ahler manifolds  proposed
by Donaldson \cite{Don99} in the setting of moment maps.
The $J$-equation was solved by Song-Weinkove \cite{Song08} via $J$-flow after some progresses
made in \cite{Chen04, Wei04, Wei04D, Wei06}.

Recently, Zhang \cite{Zhang21} and Gentili \cite{Gen21} investigated
 a general class of fully non-linear equations on HKT manifolds $(M, I, J, K, g)$
\begin{eqnarray}\label{ZG}
G(\Omega_u)=e^F,
\end{eqnarray}
which includes the Hessian quotient equation \eqref{K-eq}.
They solved the equation \eqref{ZG} independently on closed HKT manifold
with a flat hyperK\"ahler metric under the existence of an admissible $\mathcal{C}$-subsolution by adapting the approach of Sz\'ekelyhidi \cite{Sze18}
to the hypercomplex setting. In particular,
the $C^0$ estimate for solution of the equation \eqref{ZG} was established in \cite{Zhang21, Gen21} by using
the Alexandroff-Bakelman-Pucci (ABP) method as in \cite{Sze18} which
generalized the scheme of B{\l}ocki \cite{Blo05-1}.

Thus, it is a very interesting problem to solve the
equation \eqref{ZG} on the compact KHT manifold without any additional assumption on its
HKT structure.  In this paper, we can make some progress on this problem. In details,
we can prove the $C^0$ estimate for solutions of the Hessian quotient equation \eqref{K-eq} with
on the compact KHT manifold without any additional assumption on its
HKT structure. To statement our main result, we need to introduce the cone condition which
is similar to the K\"ahler setting
\cite{Song08, Fang11, Guan15, Sun17}.
\begin{definition}
Let $\Omega_0$ be a q-real smooth closed $(2, 0)$-form on $M$,
we say that $\Omega_0$ satisfies the cone condition for the equation \eqref{K-eq} with respect with $\Omega$
if it satisfies
\begin{eqnarray}\label{cone}
k\Omega_{0}^{k-1}\wedge \Omega^{n-k}>le^F\Omega_{0}^{l-1}\wedge \Omega^{n-l}.
\end{eqnarray}
\end{definition}

It is easy to see that the cone condition \eqref{cone} is the necessary condition
for the solvability of \eqref{K-eq} in view of \eqref{Keq-1}, \eqref{initial-1} and \eqref{cone-local}.
We get the following result.
\begin{theorem}\label{Main}
Let $(M, I, J, K, \Omega)$ be a closed HKT manifold of dimension $n$,
and $\Omega_0 \in \Gamma_k$ be a
smooth q-real $(2, 0)$-form on $M$ with $\partial \Omega_0=0$. Assume $\Omega_0$ satisfies the
 cone condition \eqref{cone}, then
for any smooth solution $u$ of the Hessian quotient equation \eqref{K-eq} with $\sup_{M}u=0$,
the following bound holds
\begin{eqnarray*}
|u|_{C^0(M)}\leq C,
\end{eqnarray*}
where the constant $C$ depending only on the HKT structure and $F$.
\end{theorem}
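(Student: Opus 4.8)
The plan is to adapt the "clever use of the cone condition" strategy developed in the Kähler setting (Song–Weinkove \cite{Song08}, Fang–Lai–Ma \cite{Fang11}, Guan–Sun \cite{Guan15}, Sun \cite{Sun17}) directly to the HKT situation, bypassing any ABP/Błocki-type argument. First I would rewrite the Hessian quotient equation \eqref{K-eq} in local coordinates on $M$: after choosing, at a point, a frame in which $\Omega$ is the standard flat form and $\Omega_u$ is diagonal with quaternionic Hessian eigenvalues $\lambda=(\lambda_1,\dots,\lambda_n)$, the equation becomes $\sigma_k(\lambda)=e^F\sigma_l(\lambda)$, and the cone condition \eqref{cone} becomes a pointwise inequality of the form $k\sigma_{k-1}(\mu)>le^F\sigma_{l-1}(\mu)$ for the eigenvalues $\mu$ of the background form $\Omega_0$ — this is the content of the cross-references \eqref{Keq-1}, \eqref{initial-1}, \eqref{cone-local} promised in the text. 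The key structural fact I would invoke is that the operator $(\sigma_k/\sigma_l)^{1/(k-l)}$ is concave on the positive cone $\Gamma_k$ and that its linearization is a positive-definite (hyperhermitian) operator, so the equation is elliptic along the solution $u\in\Gamma_k$.

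Next, the core of the argument: I would test the equation against the function $u$ itself (or a suitable power/truncation of $u-\sup u$) and integrate by parts using the quaternionic Stokes-type formula valid on HKT manifolds — here one must be careful that $\partial\partial_J$ is not quite the same as the complex $\partial\bar\partial$, so the integration by parts picks up torsion terms, but on a compact manifold with $\partial\Omega_0=0$ (and the HKT condition $\partial\Omega=0$) these are controlled. Concretely, writing $v=u-\sup_M u\le 0$, I would consider the quantity $\int_M (-v)\,(\Omega_u^{k-1}\wedge\Omega^{n-k} - \text{interpolating terms})$ and use the cone condition together with the concavity inequality $\sigma_k/\sigma_l(\lambda)\le \sigma_k/\sigma_l(\mu) + \sum \partial_i(\sigma_k/\sigma_l)(\mu)(\lambda_i-\mu_i)$ to bound $\int_M (-v)\Omega_u^n\wedge(\cdots)$ from above by $C\int_M(-v)$ plus boundary/torsion terms. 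Combining the linearized-operator estimate with the a priori positivity coming from \eqref{cone} yields a differential inequality $\int_M (-v)^{p}\le C(p)\bigl(1+\int_M(-v)^{p-1}\bigr)$ (or its $L^\infty$ analogue via a Moser-type iteration, or via the Alexandrov maximum principle applied to the "lower envelope" of $u$).

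The main obstacle I anticipate is exactly the torsion: on a general HKT manifold the identity $\int_M \partial\partial_J u \wedge \alpha = \int_M u\,\partial\partial_J\alpha$ fails cleanly, and one must show the error terms are absorbable using only the hypotheses $\partial\Omega=0$, $\partial\Omega_0=0$, and boundedness of the geometry — this is presumably where "a clever use of the cone condition directly" does the work that the ABP method does in \cite{Zhang21, Gen21}. A secondary technical point is to verify that the linearized operator $L u = \frac{\partial}{\partial t}\big|_{t=0}\bigl(\sigma_k/\sigma_l\bigr)(\Omega_u+t\partial\partial_J u)$ is uniformly elliptic with a lower bound on its trace coming from the cone condition, so that the maximum principle / Moser iteration can be closed. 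Once the torsion terms are shown to be lower-order, the estimate $|u|_{C^0}\le C$ follows by a standard iteration, with $C$ depending only on the background data $(M,I,J,K,\Omega)$, on $\Omega_0$, and on $\|F\|_{C^0}$ (through the cone condition and the right-hand side).
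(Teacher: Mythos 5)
Your overall orientation is right: the paper does avoid ABP and instead runs a Cherrier--Tosatti--Weinkove type integral argument in the spirit of Sun, testing against $e^{-pu}$ and integrating by parts along the path $\Omega_{tu}=\Omega_0+t\,\partial\partial_J u$, with the torsion terms coming from $\partial\overline{\Omega^n}$ absorbed by Cauchy--Schwarz and iterated integration by parts. But your sketch omits the one idea that makes the scheme close, and the substitutes you offer would not work. The heart of the paper is the pointwise inequality
\begin{eqnarray*}
k\Omega_{tu}^{k-1}\wedge \Omega^{n-k}-le^F\Omega_{tu}^{l-1}\wedge \Omega^{n-l}
> C(1-t)\,\Omega_{tu}^{l-1}\wedge \Omega^{n-l}, \qquad 0\le t\le 1,
\end{eqnarray*}
i.e.\ \eqref{C0-le-3}. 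It is obtained \emph{not} from the concavity inequality $\sigma_k/\sigma_l(\lambda)\le \sigma_k/\sigma_l(\mu)+\sum_i\partial_i(\sigma_k/\sigma_l)(\mu)(\lambda_i-\mu_i)$ comparing the solution to the background --- that is precisely the $\mathcal{C}$-subsolution/ABP mechanism of Zhang and Gentili --- but from concavity of the \emph{deleted-minor} quotients $f(\alpha|j)=\big[\sigma_{k-1}(\alpha|j)/\sigma_{l-1}(\alpha|j)\big]^{1/(k-l)}$ along the segment from $\Omega_0$ to $\Omega_u$, combined with Proposition \ref{A-i}: the equation forces $\sigma_{k-1}(\Omega_u|j)/\sigma_{l-1}(\Omega_u|j)>\widetilde{F}$ at every point, while the localized cone condition \eqref{cone-1} gives $\sigma_{k-1}(\Omega_0|j)/\sigma_{l-1}(\Omega_0|j)>\widetilde{F}+\delta$. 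Interpolating yields uniform positivity of the ``linearized'' form along the whole path, which is exactly what makes the main term $p\int e^{-pu}\partial u\wedge\partial_J u\wedge(\cdots)$ coercive and able to absorb the torsion errors. Without this, the integration by parts produces a term of indefinite sign and the argument does not close.

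Two further points. The target inequality is not $\int_M(-v)^{p}\le C(p)\bigl(1+\int_M(-v)^{p-1}\bigr)$: with unspecified $p$-dependence of $C(p)$ such an inequality does not produce an $L^\infty$ bound. What is actually proved is the Cherrier inequality $\int_M|\partial e^{-pu/2}|^2(\Omega\wedge\overline{\Omega})^n\le Cp\int_M e^{-pu}(\Omega\wedge\overline{\Omega})^n$ for all large $p$ with $C$ independent of $p$, from which the $C^0$ bound follows by Sroka's iteration. And your parenthetical fallback ``via the Alexandrov maximum principle applied to the lower envelope'' is the ABP route the theorem is designed to avoid, since in the HKT setting it has so far required flatness assumptions on the hypercomplex structure; offering it as an alternative conclusion undercuts the point of the argument.
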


The proof we present is strongly motivated by the work of Sun \cite{Sun17}
in which he derive $C^0$ estimate for solutions of Hessian quotient equations on K\"ahler manifolds
by a clever use of the cone condition directly. This in turn is based on an inequality obtained originally
by Cherrier in \cite{Che87}. The method emerged in the course of proving the $C^0$ estimate for the complex Monge-Amp\`ere
equation and Hessian equations on a compact hermitian manifold \cite{Tos10, Zhang17}.

\section{Preliminaries}
\subsection{HyperK\"ahler manifold with torsion}
Let us recall
the definition of HKT manifolds. A hypercomplex manifold is a smooth
manifold $M$ of real dimension $4n$ equipped with a triple
of complex structures $(I, J, K)$ satisfying the quaternionic relations
\begin{eqnarray*}
I\circ J=-J\circ I=K.
\end{eqnarray*}
A hypercomplex manifold $(M, I, J, K, g)$ with a Riemannian metric $g$ is called a hyperhermitian manifold if $g$ is
invariant under the three complex structures $(I, J, K)$, i.e.
\begin{eqnarray*}
g=g( \cdot I ,  \cdot I )=g(\cdot J, \cdot J)=g(\cdot K, \cdot K ).
\end{eqnarray*}
This action extends uniquely to the
right action of the algebra $\mathbb{H}$ of quaternions on $TX$.
A hypercomplex manifold admits the whole sphere of complex structures namely
\begin{eqnarray*}
S_{M}=\Big\{a I+b J+c K: a, b, c \in \mathbb{R}, \ a^2+b^2+c^2=1\Big\}.
\end{eqnarray*}
and a hyperhermitian metric $g$ is invariant under all of them.

For a given
$L \in S_M$ we denote the associated hermitian form by $\omega_L$, i.e.
$\omega_L=g(\cdot L, \cdot)$. Consider the following differential form
\begin{eqnarray*}
\Omega:=\omega_J-\sqrt{-1}\omega_K.
\end{eqnarray*}

\begin{definition}\label{HKT}
A hyperhermitian manifold $(M, I, J, K, g)$ is called HKT if
\begin{eqnarray*}
\partial \Omega=0,
\end{eqnarray*}
where $\partial$ in the whole paper is the differential operator with respect to
$I$.  To be more precise,
\begin{eqnarray*}
\partial=\frac{1}{2}(d+\sqrt{-1}I\circ d\circ I)
\end{eqnarray*}
and $d$ is the standard exterior differential operator on $M$.
\end{definition}

HKT manifolds were introduced in the physical literature by Howe and
Papadopoulos \cite{How96}. For the mathematical treatment see Grantcharov-Poon \cite{Gra00}
and Verbitsky \cite{Ver02}. The original definition of HKT-metrics in \cite{How96} was different but equivalent
to Definition \ref{HKT} (the latter was given in \cite{Ver02}). The classical hyperK\"ahler metrics (i.e. Riemannian metrics with the
holonomy of the Levi-Civita connection contained in the group $Sp(n)$) form a subclass
of HKT-metrics. It is well known that a hyperhermitian metric $g$ is hyperK\"ahler if and
only if the form $\Omega$ is closed, or equivalently $\partial\Omega=0$ and $\overline{\partial} \Omega=0$.

Let $(M, I, J, K)$ be a hypercomplex manifold. Let us denote by $\Lambda^{p,q}_{I}(M)$ the vector bundle
of differential forms of the type $(p, q)$ on the complex manifold $(M, I)$. By the abuse of notation
we will also denote by the same symbol $\Lambda^{p,q}_{I}(M)$
the space of $C^{\infty}$-sections of this bundle.
The twisted Dolbeault differential operator was introduced in \cite{Ver02} as
\begin{eqnarray*}
\partial_J:=J^{-1}\circ \overline{\partial}\circ J.
\end{eqnarray*}
where
$\overline{\partial}: \Lambda^{p,q}_{I}(M)\rightarrow \Lambda^{p,q+1}_{I}(M)$
is the usual $\overline{\partial}$-differential on differential forms on the complex manifold $(M, I)$.

It is evidently that (see \cite{Ver02})
\begin{proposition}
\begin{eqnarray*}
J: \Lambda^{p,q}_{I}(M)\rightarrow \Lambda^{q, p}_{I}(M),
\end{eqnarray*}
\begin{eqnarray*}
\partial_J: \Lambda^{p,q}_{I}(M)\rightarrow \Lambda^{p+1, q}_{I}(M),
\end{eqnarray*}
\begin{eqnarray*}
\partial \partial_J=-\partial_J\partial.
\end{eqnarray*}
\end{proposition}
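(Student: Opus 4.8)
The plan is to dispatch the three assertions in order, since the first two are bidegree bookkeeping and only the last carries genuine content; throughout I use that $I$ is an integrable complex structure, so $d=\partial+\overline{\partial}$ on $(M,I)$ with $\partial$ the $(1,0)$-part as defined, and that $J$ is likewise integrable. For the first assertion I would complexify the cotangent bundle and exploit that the induced action of $J$ on forms is $\mathbb{C}$-linear (it is the complexification of a real bundle map) while $I$ and $J$ anticommute, i.e.\ the relation $I\circ J=-J\circ I$ dualizes to an anticommutation of the induced actions on $T^{*}M\otimes\mathbb{C}$. Realizing $\Lambda^{1,0}_I(M)$ and $\Lambda^{0,1}_I(M)$ as the $(\pm\sqrt{-1})$-eigenbundles of $I$, for $\alpha\in\Lambda^{1,0}_I$ one gets $I(J\alpha)=-J(I\alpha)=-\sqrt{-1}\,J\alpha$, so $J\alpha\in\Lambda^{0,1}_I$, and symmetrically $J\colon\Lambda^{0,1}_I\to\Lambda^{1,0}_I$. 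Since the action of $J$ on the exterior algebra is an algebra homomorphism, it carries a wedge of $p$ factors of type $(1,0)$ and $q$ factors of type $(0,1)$ to one with the roles reversed, giving $J\colon\Lambda^{p,q}_I(M)\to\Lambda^{q,p}_I(M)$. The second assertion then follows by composing degree shifts in $\partial_J=J^{-1}\circ\overline{\partial}\circ J$, namely $\Lambda^{p,q}_I\xrightarrow{J}\Lambda^{q,p}_I\xrightarrow{\overline{\partial}}\Lambda^{q,p+1}_I\xrightarrow{J^{-1}}\Lambda^{p+1,q}_I$, where $J^{-1}=-J$ reverses types just as $J$ does.

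For the anticommutation $\partial\partial_J=-\partial_J\partial$, the plan is to recognize it as a single bidegree component of one identity for $d$. I set $d_J:=J^{-1}\circ d\circ J$ and $\overline{\partial}_J:=J^{-1}\circ\partial\circ J$; conjugating $d=\partial+\overline{\partial}$ by $J$ gives $d_J=\overline{\partial}_J+\partial_J$, and by the degree count above $\partial_J$ has $I$-bidegree $(1,0)$ while $\overline{\partial}_J$ has $I$-bidegree $(0,1)$. Expanding the graded anticommutator and sorting by $I$-bidegree,
\begin{multline*}
dd_J+d_Jd=(\partial\partial_J+\partial_J\partial)\\
+(\partial\overline{\partial}_J+\overline{\partial}_J\partial+\overline{\partial}\partial_J+\partial_J\overline{\partial})
+(\overline{\partial}\,\overline{\partial}_J+\overline{\partial}_J\overline{\partial}),
\end{multline*}
where the three groups have $I$-bidegrees $(2,0)$, $(1,1)$ and $(0,2)$ respectively. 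Hence, granting $dd_J+d_Jd=0$, its $(2,0)$-part must vanish on its own, which is exactly $\partial\partial_J+\partial_J\partial=0$.

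The remaining and substantive point is the identity $dd_J+d_Jd=0$, which I expect to be the main obstacle, since $J$ acts only as a pointwise bundle map and does \emph{not} commute with $d$. Here I would invoke the integrability of $J$: decomposing $d$ by $J$-bidegree writes $d=\partial^{J}+\overline{\partial}^{J}$ with no Nijenhuis components, so that $(\partial^{J})^{2}=(\overline{\partial}^{J})^{2}=0$, and a short computation of $J^{-1}\circ d\circ J$ on $J$-homogeneous forms identifies $d_J=\sqrt{-1}\,(\partial^{J}-\overline{\partial}^{J})$ (the conjugate differential of $(M,J)$). Then
\begin{align*}
dd_J+d_Jd
&=\sqrt{-1}\,\bigl[(\partial^{J}+\overline{\partial}^{J})(\partial^{J}-\overline{\partial}^{J})+(\partial^{J}-\overline{\partial}^{J})(\partial^{J}+\overline{\partial}^{J})\bigr]\\
&=2\sqrt{-1}\,\bigl[(\partial^{J})^{2}-(\overline{\partial}^{J})^{2}\bigr]=0,
\end{align*}
the mixed terms cancelling automatically, which is where the integrability hypothesis is really consumed. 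As an internal consistency check I would also record the easy companions $\partial^{2}=0$ (integrability of $I$) and $\partial_J^{2}=J^{-1}\overline{\partial}^{2}J=0$, which need no further input, confirming that $\partial$ and $\partial_J$ are indeed the two anticommuting differentials of the $I$-bicomplex $(\Lambda^{\bullet,\bullet}_I(M),\partial,\partial_J)$.
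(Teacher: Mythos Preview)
Your argument is correct. The bidegree bookkeeping for the first two assertions is exactly right, and your reduction of $\partial\partial_J+\partial_J\partial=0$ to the $(2,0)$-component of $dd_J+d_Jd=0$, followed by the identification of $d_J=J^{-1}dJ$ with the $d^c$-operator of the complex structure $J$ and the use of $(\partial^J)^2=(\overline{\partial}^J)^2=0$ from integrability of $J$, is a clean and standard way to get the result. (The precise sign in $d_J=\pm\sqrt{-1}(\partial^J-\overline{\partial}^J)$ depends on the convention for the action of $J$ on forms; with the algebra-homomorphism convention you implicitly use one gets $d_J=-\sqrt{-1}(\partial^J-\overline{\partial}^J)$, but this is immaterial to the computation.)

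As for comparison: the paper does not actually prove this proposition. It is stated with the remark ``It is evidently that (see \cite{Ver02})'' and no argument is supplied. So you are not deviating from the paper's proof so much as supplying one where the paper defers to the literature. Your approach is essentially the one found in Verbitsky's original treatment, so there is no tension.
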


\begin{definition}
For each $k=1, 2, \cdot \cdot \cdot, n$. A form $\alpha \in \Lambda^{2k,0}_{I}(M)$ is called q-real if
\begin{eqnarray*}
J\alpha=\overline{\alpha}
\end{eqnarray*}
under the quaternionic conjugation $\mathbb{H}$ \cite{Ver02}.
\end{definition}

The space of q-real $(2k, 0)$ forms on $(M, I)$ will be denoted by $\Lambda^{2k,0}_{I, \mathbb{R}}(M)$.
We have the following lemma \cite{Ale13}.

\begin{lemma}
Let $(M, I, J, K)$ be a hypercomplex manifold.
For $u \in C^{2}(M, \mathbb{R})$, then $\partial\partial_J u \in \Lambda^{2,0}_{I, \mathbb{R}}(M)$
and we call it the quaternionic
Hessian of $u$.
\end{lemma}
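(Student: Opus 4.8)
The statement has two parts: that $\partial\partial_J u$ has bidegree $(2,0)$ with respect to $I$, and that it is q-real, i.e. $J(\partial\partial_J u)=\overline{\partial\partial_J u}$.

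\emph{Bidegree.} This is immediate from the Proposition. Since $u\in C^{2}(M,\mathbb{R})$ is a $(0,0)$-form, the mapping property $\partial_J\colon\Lambda^{0,0}_{I}(M)\to\Lambda^{1,0}_{I}(M)$ gives $\partial_J u\in\Lambda^{1,0}_{I}(M)$, and then $\partial\colon\Lambda^{1,0}_{I}(M)\to\Lambda^{2,0}_{I}(M)$ gives $\partial\partial_J u\in\Lambda^{2,0}_{I}(M)$.

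\emph{Reduction of q-reality.} I record three elementary facts. First, $J$ is the identity on $0$-forms, so $Ju=u$ and hence $\partial_J u=J^{-1}\overline{\partial}(Ju)=J^{-1}\overline{\partial}u$. Second, $J$ is induced by a real endomorphism, so it commutes with complex conjugation, $\overline{J\beta}=J\overline{\beta}$, and likewise for $J^{-1}$. Third, one has the Dolbeault conjugation rules $\overline{\partial\beta}=\overline{\partial}\,\overline{\beta}$ and $\overline{\overline{\partial}\beta}=\partial\overline{\beta}$. Using $\partial_J=J^{-1}\overline{\partial}J$ together with the identity $\partial\partial_J=-\partial_J\partial$ from the Proposition, I compute for real $u$
\[
J(\partial\partial_J u)=-J\partial_J\partial u=-(JJ^{-1})\overline{\partial}J\partial u=-\overline{\partial}(J\partial u),
\]
while, since $\overline{u}=u$,
\[
\overline{\partial\partial_J u}=-\overline{J^{-1}\overline{\partial}(J\partial u)}=-J^{-1}\partial(J\overline{\partial}u).
\]
Thus q-reality is \emph{equivalent} to the single operator identity $\overline{\partial}(J\partial u)=J^{-1}\partial(J\overline{\partial}u)$ on real functions $u$; call it $(\ast)$.

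\emph{The crux.} It remains to prove $(\ast)$, and this is where the hypercomplex geometry genuinely enters: $(\ast)$ is not a formal consequence of the relations above, since chasing those relations only returns the q-reality of $\partial\partial_J u$ itself. I would verify $(\ast)$ in local $I$-holomorphic coordinates $z^{1},\dots,z^{2n}$, writing the action of $J$ on $(1,0)$-forms as $J\,dz^{a}=A^{a}_{\bar b}\,d\bar z^{b}$, so that $J\,d\bar z^{a}=\overline{A^{a}_{\bar b}}\,dz^{b}$ and $A^{a}_{\bar b}\,\overline{A^{b}_{\bar c}}=-\delta^{a}_{c}$ from $J^{2}=-1$ on one-forms. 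Expanding both sides of $(\ast)$, the terms carrying second derivatives of $u$ match by the symmetry $u_{z^{a}\bar z^{b}}=u_{\bar z^{b}z^{a}}$; this is already visible in the flat model, where for $M=\mathbb{H}$ one finds $\partial\partial_J u=(u_{z^{1}\bar z^{1}}+u_{z^{2}\bar z^{2}})\,dz^{1}\wedge dz^{2}$, a real multiple of the q-real form $dz^{1}\wedge dz^{2}$. The main obstacle is the remaining terms of $(\ast)$, which carry first derivatives of $u$ multiplied by first derivatives of the coefficients $A^{a}_{\bar b}$; their cancellation is the only substantial point, and it uses the integrability of the complex structures together with the compatibility $IJ=-JI$, which impose the requisite first-order relations on the $A^{a}_{\bar b}$. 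This recovers the q-reality asserted in \cite{Ale13}.
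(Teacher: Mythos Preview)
The paper gives no proof of this lemma at all; it merely cites \cite{Ale13}. So there is no argument in the paper to compare against, and your write-up already contains more than the paper does.

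Your bidegree argument is correct. For q-reality, however, you claim that your identity $(\ast)$ ``is not a formal consequence of the relations above'' and then launch an unfinished local-coordinate computation appealing to integrability. This is a misstep: q-reality \emph{does} follow formally from the ingredients you have already listed, and no further coordinate work is needed. The clean route is to compute $\overline{\partial\partial_J u}$ from the expression $\partial\partial_J u=\partial(J^{-1}\overline{\partial}u)$ rather than from $-\partial_J\partial u$. Since $u$ is real and $J$ commutes with conjugation,
\[
\overline{\partial\partial_J u}
=\overline{\partial}\,\overline{J^{-1}\overline{\partial}u}
=\overline{\partial}\bigl(J^{-1}\partial u\bigr).
\]
On $1$-forms one has $J^{2}=-\mathrm{id}$, hence $J^{-1}=-J$ there, and so $\overline{\partial\partial_J u}=-\overline{\partial}(J\partial u)$, which is exactly your own expression for $J(\partial\partial_J u)$. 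That finishes the proof. Equivalently, your $(\ast)$ is nothing more than the complex conjugate of the anticommutation $\partial\partial_J=-\partial_J\partial$ once you note that $\overline{\partial_J\alpha}=(J^{-1}\partial J)\,\overline{\alpha}$. The integrability of the hypercomplex structure is indeed used, but it is already encoded in the Proposition you invoked; the additional coordinate analysis you sketch is unnecessary, and since you do not actually carry it out, your argument as written has a gap that the two-line computation above closes.
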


\subsection{The fundamental symmetric functions}

In this subsection, we give some basic properties of elementary symmetric functions, which could be found in
\cite{L96, S05}, and establish some key lemmas.

For $\lambda=(\lambda_1, ... , \lambda_n)\in \mathbb{R}^n$,
the $k$-th elementary symmetric function is defined
by
\begin{equation*}
\sigma_k(\lambda) = \sum _{1 \le i_1 < i_2 <\cdots<i_k\leq n}\lambda_{i_1}\lambda_{i_2}\cdots\lambda_{i_k}.
\end{equation*}
We also set $\sigma_0=1$ and denote by $\sigma_k(\lambda \left| i \right.)$ the $k$-th symmetric
function with $\lambda_i = 0$.
Recall that the G{\aa}rding's cone is defined as
\begin{equation*}
\Gamma_k=\{ \lambda  \in \mathbb{R}^n :\sigma _i (\lambda ) > 0, \ \forall \ 1 \le i \le k\}.
\end{equation*}

\begin{proposition}
Let $\lambda=(\lambda_1,\dots,\lambda_n)\in\mathbb{R}^n$ and $k
= 1, \cdots,n$, then
\begin{eqnarray}\label{2.1-1}
\sigma_k(\lambda)=\sigma_k(\lambda|i)+\lambda_i\sigma_{k-1}(\lambda|i),
\quad \forall \,1\leq i\leq n,
\end{eqnarray}
\begin{eqnarray*}\label{2.1-2}
\sum_{i=1}^{n} \lambda_i\sigma_{k-1}(\lambda|i)=k\sigma_{k}(\lambda),
\end{eqnarray*}
\begin{eqnarray*}\label{2.1-3}
\sum_{i=1}^{n}\sigma_{k}(\lambda|i)=(n-k)\sigma_{k}(\lambda).
\end{eqnarray*}
\end{proposition}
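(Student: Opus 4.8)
The three identities are purely combinatorial facts about the elementary symmetric functions, so the plan is to establish the first identity \eqref{2.1-1} directly from the definition and then deduce the second and third by summation together with a double-counting argument; I expect no genuine obstacle here beyond careful bookkeeping of the multiplicities.

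First I would prove \eqref{2.1-1}. Fixing an index $i$, I partition the $k$-element subsets $\{i_1<\cdots<i_k\}$ of $\{1,\dots,n\}$ that index $\sigma_k(\lambda)$ into those omitting $i$ and those containing $i$. The monomials indexed by subsets omitting $i$ are exactly the monomials of $\sigma_k(\lambda|i)$, since the latter is by definition obtained by setting $\lambda_i=0$. Each subset containing $i$ contributes a monomial with $\lambda_i$ factored out, the complementary product ranging over $(k-1)$-element subsets of $\{1,\dots,n\}\setminus\{i\}$; these sum to $\lambda_i\sigma_{k-1}(\lambda|i)$. Adding the two contributions yields \eqref{2.1-1}.

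Next I would obtain the third identity by a double-counting argument. Summing $\sigma_k(\lambda|i)$ over all $i$, each fixed $k$-element subset $S$ with monomial $\lambda_{i_1}\cdots\lambda_{i_k}$ is counted once for every index $i\notin S$, and there are exactly $n-k$ such indices. Hence every monomial of $\sigma_k(\lambda)$ appears with multiplicity $n-k$, which gives $\sum_{i=1}^n\sigma_k(\lambda|i)=(n-k)\sigma_k(\lambda)$.

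Finally, I would derive the second identity by summing \eqref{2.1-1} over $i=1,\dots,n$, obtaining $n\sigma_k(\lambda)=\sum_{i}\sigma_k(\lambda|i)+\sum_i\lambda_i\sigma_{k-1}(\lambda|i)$; substituting the third identity just proved and rearranging isolates $\sum_i\lambda_i\sigma_{k-1}(\lambda|i)=k\sigma_k(\lambda)$. Equivalently, the second identity admits a direct double-counting proof, since the terms of $\lambda_i\sigma_{k-1}(\lambda|i)$ correspond to $k$-subsets carrying a distinguished element $i$, and each $k$-subset has exactly $k$ elements available to be distinguished, so each monomial is counted $k$ times. As all steps are elementary, the only point demanding genuine care is getting the correct multiplicities $n-k$ and $k$ right in the two double-counting arguments.
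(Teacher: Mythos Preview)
Your argument is correct and is the standard elementary combinatorial proof of these identities. Note that the paper itself does not supply a proof of this proposition: it is stated as a known fact with a reference to \cite{L96, S05}, so there is no ``paper's own proof'' to compare against. Your partition-and-double-counting approach is exactly the kind of elementary derivation those references contain, and nothing further is needed.
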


The generalized Newton-MacLaurin inequality are as follows, which will be used later.

\begin{proposition}
For $\lambda \in \Gamma_k$ and $n\geq k>l\geq 0$, $ r>s\geq 0$, $k\geq r$, $l\geq s$, we have
\begin{align}\label{NM}
\Bigg[\frac{{\sigma _k(\lambda )}/{C_n^k }}{{\sigma _l (\lambda )}/{C_n^l }}\Bigg]^{\frac{1}{k-l}}
\le \Bigg[\frac{{\sigma _r (\lambda )}/{C_n^r }}{{\sigma _s (\lambda )}/{C_n^s }}\Bigg]^{\frac{1}{r-s}}.
\end{align}
\end{proposition}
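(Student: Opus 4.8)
The plan is to reduce the stated inequality to the single classical fact that the normalized elementary symmetric functions are log-concave in the index, and then to package the two-parameter monotonicity as a clean statement about geometric means of a monotone sequence. Throughout write $p_j:=\sigma_j(\lambda)/C_n^j$ for $0\le j\le n$, with the convention $p_0=1$. Since $\lambda\in\Gamma_k$ we have $\sigma_j(\lambda)>0$, hence $p_j>0$, for every $0\le j\le k$; as all indices occurring in the statement satisfy $0\le s\le l<k\le n$ and $0\le s<r\le k$, every $p_j$ that we divide by is strictly positive, so the ratios introduced below are well defined.

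First I would record Newton's inequality in normalized form, $p_{j-1}p_{j+1}\le p_j^2$ for $1\le j\le n-1$, which holds for all real $\lambda$ and in particular on $\Gamma_k$ (this is classical; see \cite{L96, S05}). Setting $q_j:=p_j/p_{j-1}$ for $1\le j\le k$, Newton's inequality reads exactly $q_{j+1}\le q_j$, so the finite sequence $q_1\ge q_2\ge\cdots\ge q_k>0$ is non-increasing. By telescoping, for any $0\le l<k$,
\[
\frac{p_k}{p_l}=\prod_{j=l+1}^{k}q_j,\qquad\text{hence}\qquad \Bigl(\frac{p_k}{p_l}\Bigr)^{\frac{1}{k-l}}=\Bigl(\prod_{j=l+1}^{k}q_j\Bigr)^{\frac{1}{k-l}}=:G(k,l),
\]
which is precisely the geometric mean of the block $q_{l+1},\dots,q_k$. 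The two sides of \eqref{NM} are $G(k,l)$ and $G(r,s)$, so it suffices to prove $G(k,l)\le G(r,s)$ whenever $k\ge r$ and $l\ge s$.

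It then remains to prove the elementary lemma that, for a non-increasing positive sequence, the geometric mean over an index block does not increase when the block is enlarged on the right and shrunk on the left. Concretely I would establish two one-sided monotonicities. First, deleting from a block elements that are no larger than every retained entry does not decrease the geometric mean; applied to $q_{r+1},\dots,q_k\le q_r$ this yields $G(k,s)\le G(r,s)$ for $r>s$. Second, adjoining to a block elements that are no smaller than every present entry does not decrease the geometric mean; applied to $q_{s+1},\dots,q_l\ge q_{l+1}$ this yields $G(k,l)\le G(k,s)$ for $l\ge s$. Each follows from the one-entry computation that appending $x$ to a block of geometric mean $g$ gives a mean $\ge g$ iff $x\ge g$ (and deleting $x$ gives a mean $\ge g$ iff $x\le g$), together with the fact that, adjoining (resp. deleting) the entries one at a time in the order dictated by monotonicity of $(q_j)$, the running mean always stays on the correct side of the next entry. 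Chaining the two gives
\[
G(k,l)\ \le\ G(k,s)\ \le\ G(r,s),
\]
which is exactly \eqref{NM}.

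The one genuinely analytic ingredient is Newton's inequality $p_{j-1}p_{j+1}\le p_j^2$, equivalently the log-concavity of $j\mapsto p_j$; granting it, everything else is the monotone-sequence bookkeeping above. Accordingly I expect the only real subtlety to be the chaining when $r$ and $l$ are \emph{incomparable} (the hypotheses force no order between them): routing the comparison through the intermediate quantity $G(k,s)$, so that the lower index is decreased from $l$ to $s$ before the upper index is decreased from $k$ to $r$, resolves this cleanly and is the key bookkeeping point of the argument.
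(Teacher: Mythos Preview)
The paper does not actually supply a proof of this proposition: it is listed among the ``basic properties of elementary symmetric functions, which could be found in \cite{L96, S05}'' at the start of Section~2.2, and is quoted without argument. So there is no in-paper proof to compare against.

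Your argument is correct and is essentially the standard derivation of the generalized Newton--Maclaurin inequality from Newton's inequalities $p_{j-1}p_{j+1}\le p_j^2$. The reduction to the monotone sequence $q_j=p_j/p_{j-1}$ and to geometric means of consecutive blocks is the natural one, and your two one-sided monotonicities (shrinking on the right, enlarging on the left) chained through the intermediate $G(k,s)$ handle the general case cleanly, including when $r$ and $l$ are incomparable. The only external input is Newton's inequality itself, which is exactly what the cited references provide.
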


\begin{proposition}\label{Ell}
For $\lambda\in\Gamma_{k}$ and $0\leq l<k\leq n$, we have
\begin{eqnarray*}\label{0-1-sum}
\frac{\partial[\frac{\sigma_{k}}{\sigma_{l}}]
(\lambda)}{\partial\lambda_{i}}>0, \quad \forall \ 1\leq i\leq n.
\end{eqnarray*}
\end{proposition}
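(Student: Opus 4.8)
The plan is to reduce the stated monotonicity to a Newton-type inequality in one fewer variable. Writing $f=\sigma_k/\sigma_l$ and using $\partial\sigma_m/\partial\lambda_i=\sigma_{m-1}(\lambda|i)$, the quotient rule gives
\[
\frac{\partial f}{\partial\lambda_i}
=\frac{\sigma_{k-1}(\lambda|i)\,\sigma_l(\lambda)-\sigma_k(\lambda)\,\sigma_{l-1}(\lambda|i)}{\sigma_l(\lambda)^2}.
\]
Since $\lambda\in\Gamma_k$ and $l<k$, we have $\sigma_l(\lambda)>0$ (with the convention $\sigma_0=1$), so the denominator is positive and it suffices to prove the numerator $N_i$ is positive. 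The first move is to eliminate $\lambda_i$: substituting the recurrence \eqref{2.1-1}, namely $\sigma_m(\lambda)=\sigma_m(\lambda|i)+\lambda_i\sigma_{m-1}(\lambda|i)$ for $m=k$ and $m=l$, the two $\lambda_i$-terms cancel and $N_i$ collapses to the purely $(n-1)$-variable expression
\[
N_i=\sigma_{k-1}(\lambda|i)\,\sigma_l(\lambda|i)-\sigma_k(\lambda|i)\,\sigma_{l-1}(\lambda|i),
\]
where $\lambda|i\in\mathbb{R}^{n-1}$.

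Next I would record the cone fact that $\lambda\in\Gamma_k(\mathbb{R}^n)$ implies $\lambda|i\in\Gamma_{k-1}(\mathbb{R}^{n-1})$, a standard property of the G{\aa}rding cone found in the references \cite{L96, S05}, so that $\sigma_m(\lambda|i)>0$ for all $0\le m\le k-1$; in particular $\sigma_{l-1}$, $\sigma_l$ and $\sigma_{k-1}$ evaluated at $\lambda|i$ are all positive. The case $l=0$ is then immediate, since $N_i=\sigma_{k-1}(\lambda|i)>0$. For $l\ge 1$ I would split on the sign of the one uncontrolled term $\sigma_k(\lambda|i)$. If $\sigma_k(\lambda|i)\le 0$, then $N_i\ge \sigma_{k-1}(\lambda|i)\,\sigma_l(\lambda|i)>0$ with nothing further to prove; note this covers $k=n$ automatically, where $\sigma_k(\lambda|i)=0$ because $\lambda|i$ has only $n-1$ entries.

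The remaining case $\sigma_k(\lambda|i)>0$ is where the real content lies: now $\lambda|i\in\Gamma_k(\mathbb{R}^{n-1})$, so $\sigma_0,\dots,\sigma_k$ are all positive at $\lambda|i$, and I would invoke the strict Newton inequalities $\sigma_{j-1}\sigma_{j+1}<\sigma_j^2$ for $1\le j\le k-1$ (strict because the Maclaurin normalizing factor $\binom{n-1}{j-1}\binom{n-1}{j+1}/\binom{n-1}{j}^2$ is strictly less than $1$ while $\sigma_j^2>0$ here). These say that the ratios $\sigma_j(\lambda|i)/\sigma_{j-1}(\lambda|i)$ are strictly decreasing in $j$, so telescoping from $l$ up to $k$ gives $\sigma_l/\sigma_{l-1}>\sigma_k/\sigma_{k-1}$ at $\lambda|i$, which upon clearing the positive denominators is precisely $N_i>0$; alternatively one reads this off the generalized Newton-MacLaurin inequality \eqref{NM} applied to $\lambda|i$. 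I expect the main obstacle to be exactly the bookkeeping around the term $\sigma_k(\lambda|i)$: deleting a variable from a point of $\Gamma_k$ only guarantees positivity of symmetric functions up to index $k-1$, so the monotone/telescoping argument cannot be applied blindly and must be supplemented by the sign dichotomy above; ensuring that every inequality used is strict, rather than merely non-strict, is the other point requiring care.
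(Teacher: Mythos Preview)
Your argument is correct and entirely standard: the reduction via the quotient rule and the recurrence \eqref{2.1-1} to $N_i=\sigma_{k-1}(\lambda|i)\sigma_l(\lambda|i)-\sigma_k(\lambda|i)\sigma_{l-1}(\lambda|i)$ is clean, the cone fact $\lambda|i\in\Gamma_{k-1}(\mathbb{R}^{n-1})$ handles the positivity of all but $\sigma_k(\lambda|i)$, and the sign dichotomy together with the strict Newton inequalities finishes the job. Note that the paper does not actually supply a proof of this proposition---it is listed among the ``basic properties of elementary symmetric functions, which could be found in \cite{L96, S05}''---so there is no alternative argument in the paper to compare with; your write-up is precisely the standard proof one finds in those references, and in fact the paper then uses this proposition (together with \eqref{2.1-1}) to deduce the inequality \eqref{initial}, which is essentially the positivity of your numerator $N_i$ rewritten as $\sigma_{k-1}(\lambda|i)/\sigma_{l-1}(\lambda|i)>\sigma_k(\lambda)/\sigma_l(\lambda)$.
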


\begin{proposition}\label{Con}
For any $n\geq k>l\ge 0$,
\begin{eqnarray*}
\bigg[\frac{\sigma_{k}(\lambda)}{\sigma_{l}(\lambda)}\bigg]^{\frac{1}{k-l}}
\end{eqnarray*}
is a concave function in $\Gamma_k$.
\end{proposition}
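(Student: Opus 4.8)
The plan is to reduce the statement to the single-step case $l=k-1$ by exhibiting $[\sigma_k/\sigma_l]^{1/(k-l)}$ as an equal-weight geometric mean of the consecutive quotients $q_m:=\sigma_{m+1}/\sigma_m$. Telescoping on $\Gamma_k$, where $\sigma_l,\dots,\sigma_k>0$, gives
\[
\frac{\sigma_k}{\sigma_l}=\prod_{m=l}^{k-1}\frac{\sigma_{m+1}}{\sigma_m}=\prod_{m=l}^{k-1}q_m,
\qquad\text{hence}\qquad
\Big[\tfrac{\sigma_k}{\sigma_l}\Big]^{\frac{1}{k-l}}=\Big(\prod_{m=l}^{k-1}q_m\Big)^{\frac{1}{k-l}}.
\]
Thus $F:=[\sigma_k/\sigma_l]^{1/(k-l)}$ is the equal-weight geometric mean of the $k-l$ functions $q_l,\dots,q_{k-1}$.

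Next I would invoke the vector composition rule for concavity. Since $\Gamma_k$ is an open convex cone and $\Gamma_k\subseteq\Gamma_{m+1}$ for each $l\le m\le k-1$, every $q_m$ is a well-defined positive function on $\Gamma_k$. The map $\Phi(y_l,\dots,y_{k-1})=\big(\prod_{m=l}^{k-1}y_m\big)^{1/(k-l)}$ is concave on the positive orthant and nondecreasing in each variable (its partials equal $\tfrac{1}{k-l}\Phi/y_m>0$). Hence, assuming each $q_m$ is concave on $\Gamma_k$, for $\lambda=t\lambda'+(1-t)\lambda''$ the concavity of the $q_m$ together with the monotonicity and concavity of $\Phi$ yields
\[
F(\lambda)=\Phi\big(q(\lambda)\big)\ge\Phi\big(tq(\lambda')+(1-t)q(\lambda'')\big)\ge tF(\lambda')+(1-t)F(\lambda''),
\]
which is precisely the concavity of $F$. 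This reduces Proposition \ref{Con} to the single assertion that each consecutive quotient $q_m=\sigma_{m+1}/\sigma_m$ is concave on $\Gamma_{m+1}$.

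The heart of the argument, and the step I expect to be the main obstacle, is therefore this base case. Its advantage is that the relevant exponent is $1/(k-l)=1$, so the quadratic correction term that would otherwise arise from differentiating a power drops out, and I only need the plain Hessian bound $\sum_{p,q}\partial_p\partial_q q_m\,\xi_p\xi_q\le 0$ for all $\xi\in\R^n$. Working at a fixed $\lambda\in\Gamma_{m+1}$ and using $\partial_p\sigma_j=\sigma_{j-1}(\lambda|p)$, I would expand $\partial_p\partial_q(\sigma_{m+1}/\sigma_m)$ via the recursion \eqref{2.1-1} in terms of the one- and two-index deletions $\sigma(\lambda|p)$, $\sigma(\lambda|pq)$, and reduce the required sign to a Newton--MacLaurin type inequality among $\sigma_{m-1},\sigma_m,\sigma_{m+1}$; the generalized inequality \eqref{NM} is the key algebraic input forcing the form to be negative semidefinite. (Equivalently, the base case can be extracted from the classical G{\aa}rding concavity of $\sigma_{m+1}^{1/(m+1)}$, but the direct computation keeps everything inside the symmetric-function identities recorded above.) Once the base case is secured, the geometric-mean reduction closes the proof, while positivity of $F$ on $\Gamma_k$ is immediate from $\sigma_k,\sigma_l>0$.
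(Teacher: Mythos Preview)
The paper does not supply its own proof of this proposition: it is recorded in the preliminaries as one of the ``basic properties of elementary symmetric functions, which could be found in \cite{L96, S05}'' and is simply stated without argument. So there is no paper proof to compare against; the proposition is treated as a known fact from the literature.

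Your proposed route---telescoping $\sigma_k/\sigma_l$ into consecutive quotients and then invoking the concavity and coordinatewise monotonicity of the geometric mean---is a standard and correct reduction. The composition argument is sound: $\Phi(y)=\big(\prod_m y_m\big)^{1/(k-l)}$ is concave and increasing on the positive orthant, so concavity of each $q_m$ on the convex set $\Gamma_k\subset\Gamma_{m+1}$ yields concavity of $F$. The only substantive step you defer is the base case that $\sigma_{m+1}/\sigma_m$ is concave on $\Gamma_{m+1}$; this is itself the $k-l=1$ instance of the proposition and is classical (e.g.\ it follows from the Newton--MacLaurin inequalities, or from the hyperbolic-polynomial concavity of Lieberman/Spruck that the paper cites). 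If you want the proposal to stand on its own you should either carry out that Hessian computation or give a precise citation, since as written the base case is only sketched.
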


We recall the G{\aa}rding's inequality (see \cite{CNS85}).

\begin{proposition}
For $1\leq k\leq n$ and $\lambda, \mu \in \Gamma_k$, then we have
\begin{eqnarray*}
\sum_{i=1}^{n}\mu_i\sigma_{k-1}(\lambda|i)\geq k[\sigma_{k}(\mu)]^{\frac{1}{k}}[\sigma_{k}(\lambda)]^{1-\frac{1}{k}}.
\end{eqnarray*}
In particular,
\begin{eqnarray}\label{G-i}
\sum_{i=1}^{n}\mu_i\sigma_{k-1}(\lambda|i)>0.
\end{eqnarray}
\end{proposition}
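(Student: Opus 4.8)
The plan is to deduce this Gårding-type inequality directly from the concavity of $\sigma_k^{1/k}$ on the cone $\Gamma_k$, which is exactly the content of Proposition \ref{Con} in the case $l=0$ (with $\sigma_0=1$). First I would record the elementary fact, immediate from the recurrence \eqref{2.1-1}, that
\[
\frac{\partial \sigma_k}{\partial \lambda_i}(\lambda)=\sigma_{k-1}(\lambda|i),
\]
since both $\sigma_k(\lambda|i)$ and $\sigma_{k-1}(\lambda|i)$ are independent of $\lambda_i$. Consequently the left-hand side $\sum_{i=1}^n\mu_i\sigma_{k-1}(\lambda|i)$ is precisely the directional derivative of $\sigma_k$ at $\lambda$ in the direction $\mu$, and it is this gradient structure that drives the whole argument.

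Next I would set $f:=\sigma_k^{1/k}$, a function concave on the convex cone $\Gamma_k$ by Proposition \ref{Con} and homogeneous of degree $1$ because $\sigma_k$ is homogeneous of degree $k$. Since $\Gamma_k$ is convex, the first-order (supporting-plane) characterization of concavity gives, for all $\lambda,\mu\in\Gamma_k$,
\[
f(\mu)\le f(\lambda)+\nabla f(\lambda)\cdot(\mu-\lambda).
\]
Euler's identity for the degree-one homogeneous function $f$ yields $\nabla f(\lambda)\cdot\lambda=f(\lambda)$, so the two occurrences of $f(\lambda)$ cancel and the bound collapses to the clean inequality $f(\mu)\le\nabla f(\lambda)\cdot\mu$.

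It then remains only to make the gradient explicit. From $f=\sigma_k^{1/k}$ and the derivative formula above,
\[
\nabla f(\lambda)\cdot\mu=\frac{1}{k}\,\sigma_k(\lambda)^{\frac{1-k}{k}}\sum_{i=1}^n\mu_i\,\sigma_{k-1}(\lambda|i),
\]
while $f(\mu)=\sigma_k(\mu)^{1/k}$. Substituting these into $f(\mu)\le\nabla f(\lambda)\cdot\mu$ and multiplying through by the positive factor $k\,\sigma_k(\lambda)^{(k-1)/k}$ produces precisely
\[
\sum_{i=1}^n\mu_i\,\sigma_{k-1}(\lambda|i)\ge k\,[\sigma_k(\mu)]^{\frac1k}[\sigma_k(\lambda)]^{1-\frac1k}.
\]
The asserted positivity \eqref{G-i} is then immediate, since $\sigma_k(\mu)>0$ and $\sigma_k(\lambda)>0$ on $\Gamma_k$ force the right-hand side to be strictly positive. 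The only point demanding genuine care is the bookkeeping of the exponents $1/k$ and $(k-1)/k$ together with the cancellation afforded by Euler's identity; there is no analytic obstacle beyond this, the essential work having been front-loaded into the concavity statement of Proposition \ref{Con}.
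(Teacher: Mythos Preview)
Your argument is correct: the derivation of G{\aa}rding's inequality from the concavity and degree-one homogeneity of $\sigma_k^{1/k}$ via the supporting-hyperplane inequality together with Euler's identity is the standard route, and every step you outline is valid. Note, however, that the paper does not actually supply its own proof of this proposition; it simply recalls the inequality from the literature with a reference to \cite{CNS85}. So there is no in-paper argument to compare against---your write-up fills in exactly the kind of proof one finds behind that citation.
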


As a corollary of Proposition \ref{Ell}, we can obtain by using the equality \eqref{2.1-1}.
\begin{proposition}
Assume $\lambda \in \Gamma_k$ and $1\leq l<k\leq n$, then for any $i \in \{1, 2, ..., n\}$ we have
\begin{eqnarray}\label{initial}
\frac{\sigma_{k-1}(\lambda|i)}{\sigma_{l-1}(\lambda|i)}>\frac{\sigma_{k}(\lambda)}{\sigma_{l}(\lambda)}.
\end{eqnarray}
\end{proposition}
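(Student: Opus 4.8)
The plan is to obtain \eqref{initial} directly from the strict monotonicity recorded in Proposition~\ref{Ell}, the relevant derivative being computed via the recurrence \eqref{2.1-1}. First I would note that differentiating $\sigma_k(\lambda)=\sigma_k(\lambda|i)+\lambda_i\sigma_{k-1}(\lambda|i)$ in the variable $\lambda_i$---and observing that neither $\sigma_k(\lambda|i)$ nor $\sigma_{k-1}(\lambda|i)$ depends on $\lambda_i$---gives $\partial\sigma_k/\partial\lambda_i=\sigma_{k-1}(\lambda|i)$, and likewise $\partial\sigma_l/\partial\lambda_i=\sigma_{l-1}(\lambda|i)$. The quotient rule then yields
\[
\frac{\partial}{\partial\lambda_i}\frac{\sigma_k(\lambda)}{\sigma_l(\lambda)}
=\frac{\sigma_{k-1}(\lambda|i)\,\sigma_l(\lambda)-\sigma_k(\lambda)\,\sigma_{l-1}(\lambda|i)}{\sigma_l(\lambda)^2}.
\]
By Proposition~\ref{Ell} the left-hand side is strictly positive, so the numerator is positive; that is, $\sigma_{k-1}(\lambda|i)\,\sigma_l(\lambda)>\sigma_k(\lambda)\,\sigma_{l-1}(\lambda|i)$.

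The second step is to pass from this cross-multiplied inequality to the ratio form \eqref{initial}, which requires the two denominators $\sigma_l(\lambda)$ and $\sigma_{l-1}(\lambda|i)$ to be strictly positive. Since $\lambda\in\Gamma_k$ and $l<k$, one has $\sigma_l(\lambda)>0$ immediately. For $\sigma_{l-1}(\lambda|i)$ I would invoke the standard truncation property that $\lambda\in\Gamma_k$ forces the deleted vector $\lambda|i$ to lie in $\Gamma_{k-1}\subset\mathbb{R}^{n-1}$; since $1\leq l<k$ gives $l-1\leq k-2\leq k-1$, this makes $\sigma_{l-1}(\lambda|i)>0$ (the case $l=1$ being immediate from $\sigma_0=1$). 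Dividing the numerator inequality by the positive quantity $\sigma_l(\lambda)\,\sigma_{l-1}(\lambda|i)$ then produces exactly \eqref{initial}.

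Given Proposition~\ref{Ell}, there is no serious obstacle here: the argument is a single derivative computation followed by a rearrangement. The one point deserving attention is the positivity of $\sigma_{l-1}(\lambda|i)$, which is why I would isolate the inclusion $\lambda|i\in\Gamma_{k-1}$ as the only nontrivial input---without it the ratio in \eqref{initial} need not even be well-signed. One could alternatively leave the conclusion in the equivalent cross-multiplied form, which follows from Proposition~\ref{Ell} with no positivity hypothesis at all; but to match the statement as written, the division step, and hence that positivity, cannot be avoided.
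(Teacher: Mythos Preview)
Your proposal is correct and follows precisely the route the paper indicates: the paper states the proposition as a direct corollary of Proposition~\ref{Ell} via the recurrence \eqref{2.1-1}, and you have simply filled in the computation (the derivative identity $\partial\sigma_k/\partial\lambda_i=\sigma_{k-1}(\lambda|i)$, the quotient rule, and the division by the positive $\sigma_{l-1}(\lambda|i)$). Your care about the positivity of $\sigma_{l-1}(\lambda|i)$ via the truncation inclusion $\lambda|i\in\Gamma_{k-1}$ is the only detail the paper leaves implicit.
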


\subsection{Hyperhermitian matrices}

In this subsection, we will remind the basic properties of hyperhermitian matrices.
First, we shall also use a version of a determinant defined for
hyperhermitian quaternionic matrices referring for further details, properties.
For a quaternionic $n \times n$-matrix $A \in \mathrm{Mat}_n(\mathbb{H})$ let us denote by $A^{\mathbb{R}}$ the
realization matrix of $A$ which is a real $4n \times 4n$-matrix. Assume $A=A_0+iA_1+jA_2+kA_3$,
where $A_0, A_1, A_2, A_3$ are real $n\times n$ matrices, then (see Page 10 in \cite{Asl96})
\begin{equation}
A^{\mathbb{R}}=\left(
\begin{array}{cccc}
A_0 & -A_1 & -A_2 & -A_3\\
A_1 & A_0 & -A_3 & A_2\\
A_2 & A_3 & A_0 & -A_1\\
A_3 & -A_2 & A_1 & A_0
\end{array}
\right) .
\end{equation}
Clearly, if $A=(a_{ij})$ is hyperhemitian, i.e. $\overline{a_{ij}}=a_{ji}$, $A^{\mathbb{R}}$ is real symmetric.

The following is classical (see \cite{Asl96} for the references).
\begin{theorem}
There exists a polynomial $P$ defined on the space of all hyperhermitian $n\times n$-matrices
such that for any hyperhermitian $n\times n$-matrix
A one has $\mathrm{det}(A^{\mathbb{R}}) = P^4(A)$ and $P(Id)=1$. Furthermore $P$ is homogeneous of degree $n$.
\end{theorem}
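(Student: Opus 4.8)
The plan is to read off everything from the eigenvalue structure of the real symmetric matrix $A^{\mathbb{R}}$, and then to extract a polynomial fourth root. The key observation is that $A^{\mathbb{R}}$ represents the $\mathbb{R}$-linear map of left multiplication $v\mapsto Av$ on $\mathbb{H}^{n}\cong\mathbb{R}^{4n}$, while the right multiplications $R_{i},R_{j},R_{k}\colon v\mapsto vi,\ vj,\ vk$ are $\mathbb{R}$-linear maps on the same space. By associativity of quaternion multiplication, left matrix multiplication commutes with right scalar multiplication, so $A^{\mathbb{R}}R_{q}=R_{q}A^{\mathbb{R}}$ for $q\in\{i,j,k\}$ and for every (in particular every hyperhermitian) $A$. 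Since $A$ is hyperhermitian, $A^{\mathbb{R}}$ is real symmetric (as noted above), hence all $4n$ of its eigenvalues are real.

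I would first establish the multiplicity-four statement. Because $R_{i},R_{j},R_{k}$ commute with $A^{\mathbb{R}}$, every (real) eigenspace of $A^{\mathbb{R}}$ is invariant under $R_{i},R_{j},R_{k}$, hence is a right $\mathbb{H}$-submodule of $\mathbb{H}^{n}$ and therefore has real dimension divisible by four. Listing the eigenvalues in these quaternionic blocks, there are real numbers $\lambda_{1},\dots,\lambda_{n}$, each occurring with multiplicity exactly four, so that $\det(A^{\mathbb{R}})=\prod_{i=1}^{n}\lambda_{i}^{4}=\bigl(\prod_{i=1}^{n}\lambda_{i}\bigr)^{4}$ at every hyperhermitian point. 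This already exhibits the pointwise factorization with candidate $P(A):=\prod_{i=1}^{n}\lambda_{i}$; it remains to promote $P$ from a function to a homogeneous polynomial of degree $n$ in the entries of $A$.

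For polynomiality I would work with the characteristic polynomial $f(t):=\det\bigl(tI_{4n}-\mathcal{A}^{\mathbb{R}}\bigr)\in R[t]$, where $R=\mathbb{R}[x_{1},\dots,x_{N}]$ is the coordinate ring of the space of hyperhermitian matrices and $\mathcal{A}^{\mathbb{R}}$ carries the coordinate functions as entries; its coefficient of $t^{4n-m}$ is, up to sign, the $m$-th principal-minor sum of $\mathcal{A}^{\mathbb{R}}$, a polynomial homogeneous of degree $m$ in the entries (since $A\mapsto A^{\mathbb{R}}$ is linear). The multiplicity-four statement says $f(t)=g(t)^{4}$ at every real point, with $g(t)=\prod_{i}(t-\lambda_{i})$ monic of degree $n$. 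Writing $g(t)=t^{n}+\sum_{m=1}^{n}b_{m}t^{n-m}$ and matching the top $n$ coefficients of $g^{4}$ against those of $f$, one solves recursively $b_{m}=\tfrac14\bigl(c_{m}-\text{(polynomial in }b_{1},\dots,b_{m-1})\bigr)$, where $c_{m}$ is the coefficient of $t^{4n-m}$ in $f$; this determines each $b_{m}\in R$ uniquely, and grading shows $b_{m}$ is homogeneous of degree $m$. At every real point the recursion reproduces the true coefficients of $\prod_{i}(t-\lambda_{i})$, so $P:=(-1)^{n}b_{n}\in R$ satisfies $P(A)=\prod_{i}\lambda_{i}$ and is homogeneous of degree $n$. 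Since $P^{4}$ and $\det(\mathcal{A}^{\mathbb{R}})$ then agree at every real point, they agree as polynomials (a nonzero real polynomial cannot vanish on all of $\mathbb{R}^{N}$), giving $\det(A^{\mathbb{R}})=P(A)^{4}$ identically; evaluating at $A=\mathrm{Id}$, where $\mathrm{Id}^{\mathbb{R}}=I_{4n}$, forces $P(\mathrm{Id})=1$.

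The main obstacle is the transition from the pointwise fourth-power identity to an honest polynomial fourth root: a priori $\prod_{i}\lambda_{i}$ is only an algebraic function of the entries. The recursive extraction above resolves this precisely because the quaternionic commuting action forces every fiber to be a perfect fourth power in the $t$-variable, after which the dyadic recursion and a Zariski-density argument do the rest. The remaining ingredients — the block formula for $A^{\mathbb{R}}$, its symmetry in the hyperhermitian case, and the commutation $A^{\mathbb{R}}R_{q}=R_{q}A^{\mathbb{R}}$ — are elementary and can be quoted from \cite{Asl96}.
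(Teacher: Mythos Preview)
Your argument is correct, but note that the paper does not actually prove this theorem: it states it as classical and refers to \cite{Asl96} for the proof. So there is no ``paper's own proof'' to compare against here; you have supplied a self-contained argument where the paper chose to cite.

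On the substance, your two-step strategy --- first use the commuting right $\mathbb{H}$-action to force every real eigenspace of $A^{\mathbb{R}}$ to have dimension divisible by four, then promote the pointwise identity $\det(A^{\mathbb{R}})=(\prod_i\lambda_i)^4$ to a polynomial identity by recursively extracting a monic fourth root of the characteristic polynomial --- is sound. One small phrasing point: you write that the $\lambda_i$ occur ``with multiplicity exactly four,'' but what the $\mathbb{H}$-module argument gives is that each eigenvalue occurs with multiplicity a multiple of four; it is cleaner to say that the $4n$ eigenvalues of $A^{\mathbb{R}}$ can be listed as $\lambda_1,\lambda_1,\lambda_1,\lambda_1,\dots,\lambda_n,\lambda_n,\lambda_n,\lambda_n$ with the $\lambda_i$ not necessarily distinct. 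The recursive step is also fine: matching the coefficients of $t^{4n-m}$ for $m=1,\dots,n$ determines $b_1,\dots,b_n$ uniquely as polynomials in the entries, and since at every real point these must coincide with the coefficients of the unique monic fourth root of $f(t)$, the remaining coefficients of $g^4$ automatically match those of $f$ as well; the density argument then closes the gap.
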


\begin{definition}
For a hyperhermitian matrix $A$, its Moore determinant is defined as (see (8) in \cite{Asl96})
\begin{eqnarray*}
\mathrm{det}(A):=P(A) \in \mathbb{R}.
\end{eqnarray*}
The explicit formula for Moore determinant was given by Moore \cite{Moo22} (see also Page 14 in \cite{Asl96}).
\end{definition}

This notation should not cause any confusion with the
usual determinant of real or complex matrices due to part (i) of the next
theorem.
\begin{theorem}\label{A-T-1}
(i) The Moore determinant of any complex hermitian matrix is equal to the usual determinant.

(ii) For any hyperhermitian matrix $A$ and any quaternionic matrix $C$
\begin{eqnarray*}
\mathrm{det}(C^*A C)=\mathrm{det} A \cdot \mathrm{det}(C^*C),
\end{eqnarray*}
where $C^*=\overline{C}^t$.

(iii) For any hyperhermitian matrix $A$ can be symplectically diagonalized.
That is, we can find $C \in GL(n, \mathbb{H})$ with $C^*C=Id$ such that
\begin{eqnarray*}
C^*A C=\mathrm{diag}\{\lambda_1, \lambda_2, ..., \lambda_n\},
\end{eqnarray*}
where $\lambda_i \in \mathbb{R}$ for $1\leq i\leq n$.
\end{theorem}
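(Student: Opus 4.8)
The plan is to treat the realization map $A \mapsto A^{\mathbb{R}}$ as the central tool and to reduce each claim to a statement about ordinary real or complex determinants. The one structural fact I would establish at the outset is that, for the $4n\times 4n$ realization displayed above, the assignment is a ring homomorphism respecting adjoints: $(AB)^{\mathbb{R}} = A^{\mathbb{R}}B^{\mathbb{R}}$ and $(C^*)^{\mathbb{R}} = (C^{\mathbb{R}})^t$. Both reduce to the scalar case, where the displayed block is exactly the matrix of left multiplication by a quaternion in the ordered basis $(1, i, j, k)$: multiplicativity is then the statement that left multiplication is a representation of $\mathbb{H}$, and the adjoint identity follows by checking that passing from $q$ to $\overline{q}$ (negating $a_1, a_2, a_3$) transposes the displayed block. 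Additivity of the realization promotes both facts from scalars to matrices entrywise. With these in hand, Parts (i) and (ii) become routine and the genuine work concentrates in Part (iii).

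For Part (i), I would specialize to $A_2 = A_3 = 0$, so that $A = A_0 + iA_1$ is a complex Hermitian matrix over $\mathbb{C} = \mathbb{R} + \mathbb{R}i$. The displayed realization then becomes block diagonal with two identical copies of the $2n\times 2n$ block $\left(\begin{smallmatrix} A_0 & -A_1 \\ A_1 & A_0\end{smallmatrix}\right)$, which is precisely the real form of the complex matrix $A_0 + iA_1$. Using the classical identity that the real determinant of the $2n\times 2n$ real form of a complex matrix $M$ equals $|\det_{\mathbb{C}} M|^2$, together with the fact that a Hermitian matrix has real complex determinant, I obtain $\det(A^{\mathbb{R}}) = (\det_{\mathbb{C}} A)^4$. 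Comparing with $\det(A^{\mathbb{R}}) = P^4(A)$ gives $P(A) = \pm\det_{\mathbb{C}}(A)$, and the normalization $P(Id)=1$ together with continuity of $P$ over the connected locus of Hermitian matrices with nonzero determinant (plus a density argument at the degenerate locus) fixes the sign to $+$.

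For Part (ii), the homomorphism and adjoint properties give $(C^*AC)^{\mathbb{R}} = (C^{\mathbb{R}})^t A^{\mathbb{R}} C^{\mathbb{R}}$; taking ordinary determinants yields $P^4(C^*AC) = (\det C^{\mathbb{R}})^2\, P^4(A)$, since $C^*AC$ is again hyperhermitian. Applying the same computation with $A = Id$ shows $P^4(C^*C) = (\det C^{\mathbb{R}})^2$, whence $P^4(C^*AC) = P^4(C^*C)\,P^4(A)$ and therefore $P(C^*AC) = \eta\, P(C^*C)\,P(A)$ with $\eta \in \{+1,-1\}$, both sides being real. To pin down $\eta = +1$ I would fix a hyperhermitian $A$ with $P(A)\neq 0$ and regard $\eta$ as a continuous $\{\pm 1\}$-valued function of $C \in GL(n,\mathbb{H})$; since $GL(n,\mathbb{H})$ is connected and $\eta = +1$ at $C = Id$, it is identically $+1$, and the remaining case $P(A)=0$ follows by continuity in $A$. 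I expect the only delicate point here to be the sign, which is exactly what the connectedness of $GL(n,\mathbb{H})$ resolves.

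Part (iii) is where I expect the real difficulty. The strategy is the quaternionic spectral theorem by induction on $n$. Viewing $\mathbb{H}^n$ as a right module with quaternionic inner product $\langle v, w\rangle = \sum_i \overline{v_i}\,w_i$, the form $v \mapsto v^* A v$ is real-valued because $A$ is hyperhermitian; maximizing it over the compact unit sphere $\{v : v^*v = 1\}$ produces a unit vector $v_1$, and the variational condition forces $A v_1 = v_1 \lambda_1$ with $\lambda_1 = v_1^* A v_1 \in \mathbb{R}$. The care needed, in the noncommutative setting, is to verify that the multiplier is a genuine real scalar and that the quaternionic-orthogonal complement $v_1^{\perp}$ is a rank-$(n-1)$ submodule on which $A$ restricts to a hyperhermitian matrix; induction then assembles a quaternionic-orthonormal eigenbasis whose matrix $C$ satisfies $C^*C = Id$ and $C^*AC = \mathrm{diag}(\lambda_1,\dots,\lambda_n)$. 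An equivalent and perhaps cleaner route is to pass to the $2n\times 2n$ complex realization $A^{\mathbb{C}}$, which is Hermitian and commutes with the complex structure $J$ encoding right multiplication by $j$; its spectral decomposition can be chosen $J$-invariant, with eigenvalues occurring in equal pairs, yielding a symplectic unitary that descends to the desired quaternionic $C$. The main obstacle throughout is managing the noncommutativity so that eigenvalues emerge real and eigenspaces are honest $\mathbb{H}$-submodules.
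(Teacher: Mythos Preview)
The paper does not prove this theorem at all: it is stated as a classical fact with a reference to Aslaksen, \emph{Quaternionic determinants}, and no argument is given. So there is nothing to compare against, and what you have written is a self-contained proof where the paper offers none. Your overall strategy---reducing (i) and (ii) to the multiplicativity and adjoint-compatibility of the realization $A\mapsto A^{\mathbb{R}}$, and proving (iii) by the quaternionic spectral theorem via a variational argument---is the standard route and is essentially correct.

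One point deserves tightening. In Part (i) you conclude $P(A)=\pm\det_{\mathbb{C}}(A)$ and then fix the sign by invoking ``continuity of $P$ over the connected locus of Hermitian matrices with nonzero determinant.'' That locus is \emph{not} connected: its components are indexed by signature, so a continuity argument on that set alone cannot propagate the sign from $Id$ to, say, $\mathrm{diag}(1,\dots,1,-1)$. The clean fix is algebraic: from $P^4=(\det_{\mathbb{C}})^4$ and both sides real one gets $P^2=(\det_{\mathbb{C}})^2$ as polynomial functions on the real vector space of Hermitian matrices, hence $(P-\det_{\mathbb{C}})(P+\det_{\mathbb{C}})=0$ in a polynomial ring which is an integral domain, so $P=\pm\det_{\mathbb{C}}$ identically, and the normalization $P(Id)=1$ selects the plus sign. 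Alternatively, note that both $P$ and $\det_{\mathbb{C}}$ are polynomials agreeing on the open set of positive-definite matrices (where your connectedness argument \emph{does} apply), hence agree everywhere. Either repair is easy, but the argument as written has a gap.

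Parts (ii) and (iii) are fine. In (ii) your use of the connectedness of $GL(n,\mathbb{H})$ to fix $\eta=+1$ is correct. In (iii) the variational construction of a real eigenvalue and the induction on the $\mathbb{H}$-orthogonal complement is the standard proof of the hyperhermitian spectral theorem; your remark that the eigenvalue is real because $v_1^*Av_1\in\mathbb{R}$ for hyperhermitian $A$ is exactly the right point.
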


Let us introduce more notation. Let A be any hyperhermitian $(n \times n)$-matrix. For any
non-empty subset $I\subset \{1,...,n\}$, the minor $M_{I}(A)$ of $A$ which is obtained by deleting the
rows and columns with indexes from the set $I$, is clearly hyperhermitian. For $I=\{1,...,n\}$,
let $\mathrm{det} M_{\{1,...,n\}}=1$. Then, we recall the following important property (see Proposition 1.1.11 in \cite{Ale03}).

\begin{proposition}
For any hyperhermitian $(n \times n)$-matrix $A$ and any diagonal real matrix $T=\mathrm{diag}\{t_1, ..., t_n\}$
\begin{eqnarray*}
\mathrm{det}(A+T)=\sum_{I\subset \{1,...,n\}}\bigg(\prod_{i \in I}t_i\bigg)\cdot \mathrm{det} M_I(A).
\end{eqnarray*}
In particular
\begin{eqnarray}\label{P-1}
\mathrm{det}(A+t \cdot Id)=\sum_{I\subset \{1,...,n\}}t^{|I|}\cdot \mathrm{det} M_I(A).
\end{eqnarray}
where $|I|$ denotes the cardinality of the set $I$.
\end{proposition}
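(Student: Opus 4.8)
The plan is to reduce the statement to the fact that the Moore determinant is \emph{affine-linear in each diagonal entry separately}, and then to expand $\mathrm{det}(A+T)$ as a multiaffine polynomial in the real variables $t_1,\dots,t_n$. Since the diagonal entries of a hyperhermitian matrix are real and each $t_i$ is real, $A+T$ stays hyperhermitian and its Moore determinant is defined throughout. For $i\in\{1,\dots,n\}$ let $E_{ii}$ denote the matrix with $1$ in the $(i,i)$ slot and $0$ elsewhere. The heart of the argument is the one-variable identity
\[
\mathrm{det}(B+sE_{ii})=\mathrm{det}(B)+s\,\mathrm{det}\,M_{\{i\}}(B),\qquad s\in\mathbb{R},
\]
valid for every hyperhermitian matrix $B$.

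To prove this identity I would invoke Moore's explicit expansion of $\mathrm{det}(B)$ as a signed sum over permutations $\sigma\in S_n$, each permutation being written as a product of disjoint cycles in Moore's canonical ordering and contributing the corresponding ordered product of the entries of $B$ (see Page 14 of \cite{Asl96}). In this expansion the diagonal entry $b_{ii}$ occurs in a monomial precisely when $\sigma$ fixes $i$, and then it occurs to the first power, as the single factor coming from the trivial cycle $(i)$; hence $\mathrm{det}(B)$ is affine in $b_{ii}$. Collecting exactly the permutations fixing $i$ and factoring out $b_{ii}$ identifies the coefficient of $b_{ii}$ with Moore's expansion of the principal submatrix $M_{\{i\}}(B)$, because removing a fixed point leaves the sign unchanged and the remaining cycles, in Moore's canonical order, are precisely those indexing $\mathrm{det}\,M_{\{i\}}(B)$. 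Applying this with $b_{ii}$ replaced by $b_{ii}+s$ gives the displayed one-variable identity. The delicate point is exactly this last combinatorial bookkeeping: since $\mathbb{H}$ is noncommutative, Moore's definition fixes an ordering inside each cyclic product, and one must check that deleting the singleton cycle $(i)$ respects that ordering, so that the surviving factors reassemble into $\mathrm{det}\,M_{\{i\}}(B)$ rather than into some reordering of it.

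With the one-variable identity in hand, the general formula follows by induction on the number of perturbed diagonal slots. Adding the variables $t_i$ one at a time and using that $M_J(B+t_iE_{ii})=M_J(B)+t_iE_{ii}$ whenever $i\notin J$, each step splits a Moore determinant into a part free of $t_i$ and a part linear in $t_i$ whose coefficient is the corresponding smaller minor. Iterating shows that $\mathrm{det}(A+T)$ is multiaffine in $(t_1,\dots,t_n)$ and that the coefficient of $\prod_{i\in I}t_i$ is produced by successively passing to principal submatrices, which is exactly $\mathrm{det}\,M_I(A)$; this yields
\[
\mathrm{det}(A+T)=\sum_{I\subseteq\{1,\dots,n\}}\Big(\prod_{i\in I}t_i\Big)\,\mathrm{det}\,M_I(A).
\]
Specializing $t_1=\cdots=t_n=t$ turns $\prod_{i\in I}t_i$ into $t^{|I|}$ and gives \eqref{P-1}. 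The main obstacle, as indicated, is the careful justification of the one-variable step via Moore's permutation formula; once the affine-linearity and the value of its coefficient are pinned down, the multiaffine expansion and the specialization are routine.
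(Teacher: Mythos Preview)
Your argument is correct. The one-variable identity $\mathrm{det}(B+sE_{ii})=\mathrm{det}(B)+s\,\mathrm{det}\,M_{\{i\}}(B)$ does follow from Moore's permutation expansion exactly as you describe: the diagonal entry $b_{ii}$ is real and hence central, it appears in a monomial if and only if the permutation fixes $i$, and deleting the singleton cycle $(i)$ preserves both the sign (since $n$ and the number of cycles each drop by one) and Moore's canonical ordering of the remaining cycles. The induction and the specialization $t_1=\cdots=t_n=t$ are then routine.

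As for the comparison: the paper does not give a proof of this proposition at all. It is stated as a recalled fact with a reference to Proposition~1.1.11 in \cite{Ale03}, so there is no argument in the paper to compare against. Your write-up therefore supplies strictly more than the paper does on this point. If you want to align with the paper's treatment you could simply cite \cite{Ale03}; if you prefer to keep your own proof, it stands on its own and the ``delicate point'' you flag about Moore's ordering is handled correctly.
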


\begin{definition}
Let $A$ be a hyperhermitian $(n \times n)$-matrix, for $k=1, 2, ..., n$, we define
\begin{eqnarray*}
\sigma_k(A)=\sigma_k(\lambda(A))=\sum _{1 \le i_1 < i_2 <\cdots<i_k\leq n}\lambda_{i_1}\lambda_{i_2}\cdots\lambda_{i_k},
\end{eqnarray*}
where $\lambda(A)=(\lambda_1(A), \lambda_2(A), ..., \lambda_n(A))$
are the real eigenvalues of $A$ (see (iii) of Theorem \ref{A-T-1}).
\end{definition}

\begin{remark}\label{R-1}
We can find from Theorem \ref{A-T-1} and the equality \eqref{P-1}
\begin{eqnarray*}
\sigma_k(A)=\frac{1}{(n-k)!}\frac{d^{n-k}}{dt^{n-k}}\mathrm{det}(A+t\cdot Id).
\end{eqnarray*}
Thus, $\sigma_k(A)$ is also the sum of all $(k \times k)$ principle minors of $A$.
\end{remark}

\begin{proposition}\label{A-concave}
Assume $\lambda(A) \in \Gamma_k$, then
\begin{eqnarray*}
\Big[\frac{\sigma_k(A)}{\sigma_l(A)}\Big]^{\frac{1}{k-l}}
\end{eqnarray*}
is concave.
\end{proposition}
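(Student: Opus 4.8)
The plan is to deduce Proposition~\ref{A-concave} from the \emph{numerical} concavity statement of Proposition~\ref{Con} via the classical principle that a symmetric concave spectral function is a concave function of the matrix (Davis). Throughout, write $\mathcal{S}_k$ for the set of hyperhermitian $(n\times n)$-matrices $A$ whose eigenvalue vector $\lambda(A)=(\lambda_1(A),\dots,\lambda_n(A))$ (Theorem~\ref{A-T-1}(iii), arranged in decreasing order) lies in $\Gamma_k$; by Remark~\ref{R-1} each $\sigma_j(A)$ is a polynomial in the entries of $A$, so $F(A):=\big[\sigma_k(A)/\sigma_l(A)\big]^{1/(k-l)}=g(\lambda(A))$ with $g(\lambda):=\big[\sigma_k(\lambda)/\sigma_l(\lambda)\big]^{1/(k-l)}$ is well defined on $\mathcal{S}_k$, and the assertion to prove is that $\mathcal{S}_k$ is convex and $F$ is concave on it. Three ingredients enter: (i) the eigenvalue majorization $\lambda(tA+(1-t)B)\prec t\lambda(A)+(1-t)\lambda(B)$ for hyperhermitian $A,B$ and $t\in[0,1]$; (ii) $\Gamma_k$ is a permutation-invariant convex cone, hence closed under majorization from below; (iii) $g$, being symmetric and concave on $\Gamma_k$ by Proposition~\ref{Con}, is Schur-concave there.

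For (i) I would use the realization $A\mapsto A^{\mathbb{R}}$ introduced above, which is $\mathbb{R}$-linear, multiplicative, and carries hyperhermitian matrices to real symmetric ones. Writing $C^{*}AC=D:=\mathrm{diag}\{\lambda_1(A),\dots,\lambda_n(A)\}$ with $C^{*}C=\mathrm{Id}$ as in Theorem~\ref{A-T-1}(iii), multiplicativity together with $(C^{*})^{\mathbb{R}}=(C^{\mathbb{R}})^{-1}$ shows $A^{\mathbb{R}}$ is similar to $D^{\mathbb{R}}$; and for real diagonal $D$ the explicit block form of $(\cdot)^{\mathbb{R}}$ gives $D^{\mathbb{R}}=\mathrm{diag}\{\lambda_1(A),\dots,\lambda_n(A)\}$ with each entry repeated four times. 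Hence the spectrum of $A^{\mathbb{R}}$ is that of $A$, each eigenvalue with multiplicity $4$. Applying Ky Fan's maximum principle for partial sums of eigenvalues to the real symmetric matrices $A^{\mathbb{R}},B^{\mathbb{R}}$ and $A^{\mathbb{R}}+B^{\mathbb{R}}=(A+B)^{\mathbb{R}}$ and dividing multiplicities by $4$ gives $\sum_{i=1}^{m}\lambda_i(A+B)\le\sum_{i=1}^{m}\lambda_i(A)+\sum_{i=1}^{m}\lambda_i(B)$ for all $m$; equality at $m=n$ holds because $\sum_i\lambda_i(\cdot)=\mathrm{tr}(\cdot)$ is additive, so $\lambda(A+B)\prec\lambda(A)+\lambda(B)$, and rescaling gives the convex-combination version.

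For (ii) and (iii): if $y\in\Gamma_k$ then all coordinate permutations of $y$ lie in $\Gamma_k$ (each $\sigma_i$ is symmetric) and hence, $\Gamma_k$ being a convex cone, so does their convex hull, which by the Hardy--Littlewood--P\'olya/Rado theorem is precisely $\{x:x\prec y\}$; this shows $\Gamma_k$ is closed under $\prec$, and the same convex-combination estimate $g(x)=g\big(\textstyle\sum_\sigma t_\sigma\,\sigma y\big)\ge\sum_\sigma t_\sigma\,g(\sigma y)=g(y)$ shows $g$ is Schur-concave. Now for $A,B\in\mathcal{S}_k$ and $t\in[0,1]$: the point $t\lambda(A)+(1-t)\lambda(B)$ lies in $\Gamma_k$, so by (i) and (ii) so does $\lambda(tA+(1-t)B)$, whence $tA+(1-t)B\in\mathcal{S}_k$ and $\mathcal{S}_k$ is a convex cone; moreover
\begin{eqnarray*}
F\big(tA+(1-t)B\big)=g\big(\lambda(tA+(1-t)B)\big)&\ge& g\big(t\lambda(A)+(1-t)\lambda(B)\big)\\
&\ge& t\,g(\lambda(A))+(1-t)\,g(\lambda(B))=tF(A)+(1-t)F(B),
\end{eqnarray*}
the first inequality by Schur-concavity together with (i), the second by Proposition~\ref{Con}.

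The one genuinely delicate step is (i): one must verify that the realization $(\cdot)^{\mathbb{R}}$ is multiplicative and that the spectrum of $A^{\mathbb{R}}$ is that of $A$ with uniform multiplicity $4$, so that Ky Fan's inequality on $\mathbb{R}^{4n}$ descends to the quaternionic eigenvalues; both are classical properties of $(\cdot)^{\mathbb{R}}$ (see \cite{Asl96}). With these in hand the argument is the standard Davis reduction, and one could equally well simply invoke the quaternionic analogue of Davis's theorem on concave spectral functions in place of steps (i)--(iii).
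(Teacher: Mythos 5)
Your proof is correct and follows essentially the same route as the paper: both pass to the real symmetric realization $A^{\mathbb{R}}$, whose spectrum is that of $A$ with each eigenvalue of multiplicity four, extract concavity of eigenvalue partial sums (Ky Fan), and combine this with Proposition~\ref{Con} via the standard Davis-type reduction for symmetric concave spectral functions. The only difference is that you spell out the majorization/Schur-concavity version of that last step, whereas the paper delegates it to the inf-of-affine-functions argument on page 277 of \cite{CNS85}.
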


\begin{proof}
Let $\lambda_1(A)\leq\lambda_2(A)\leq ...\leq\lambda_n(A)$ be the eigenvalues of $A$. Then,
we know from Theorem \ref{A-T-1} that $\lambda_1(A)\leq\lambda_2(A)\leq ...\leq\lambda_n(A)$
are also the eigenvalues of $A^{\mathbb{R}}$ with multiplicity four.
Applying the same argument for the real symmetric matrix $A^{\mathbb{R}}$ in Page 277 in \cite{CNS85},
we find for $1\leq k\leq n$
\begin{eqnarray*}
\sum_{i=1}^{k}\lambda_i(A)
\end{eqnarray*}
is concave with respect with $A$. Then,
Proposition \ref{Con} implies the conclusion (see Page 277 in \cite{CNS85} for details).
\end{proof}

\begin{proposition}\label{P-2}
Assume $\lambda(A) \in \Gamma_k$, then $\lambda(A|i) \in \Gamma_{k-1}$ for any $1\leq i\leq n$,
where $A|i$ is the matrix obtained by deleting the $i$-row and $i$-column of the matrix $A$.
\end{proposition}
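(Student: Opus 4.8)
The plan is to diagonalize $A$ symplectically and thereby reduce the statement to the elementary fact, contained in \eqref{initial}, that $\lambda\in\Gamma_{m}$ forces $\sigma_{j}(\lambda|r)>0$ for $j\le m-1$; the only real work is to express the symmetric functions of the eigenvalues of the submatrix $A|i$ in terms of those of $\lambda(A)$.

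By Theorem \ref{A-T-1}(iii) write $A=U^{*}\Lambda U$ with $U\in GL(n,\mathbb{H})$, $U^{*}U=Id$, and $\Lambda=\mathrm{diag}\{\lambda_{1},\dots,\lambda_{n}\}$, where $\lambda=\lambda(A)\in\Gamma_{k}$. Fix $i$ and set $\xi=Ue_{i}\in\mathbb{H}^{n}$, so that $\xi^{*}\xi=1$ and $e_{i}e_{i}^{t}=U^{*}\xi\xi^{*}U$. Then, for real parameters $u,t$,
\begin{equation*}
A+u\,Id+t\,e_{i}e_{i}^{t}=U^{*}\big(\Lambda+u\,Id+t\,\xi\xi^{*}\big)U,
\end{equation*}
and Theorem \ref{A-T-1}(ii) gives $\mathrm{det}(A+u\,Id+t\,e_{i}e_{i}^{t})=\mathrm{det}(\Lambda+u\,Id+t\,\xi\xi^{*})$. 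Applying the expansion of $\mathrm{det}(\,\cdot\,+T)$ to $A+u\,Id$ with $T=t\,e_{i}e_{i}^{t}$ shows the left side equals $\mathrm{det}(A+u\,Id)+t\,\mathrm{det}(A|i+u\,Id)$, whose coefficient of $t$ is $\mathrm{det}(A|i+u\,Id)=\sum_{j=0}^{n-1}\sigma_{j}(A|i)\,u^{\,n-1-j}$. For the right side, assume first that $D:=\Lambda+u\,Id$ is positive definite, factor $D+t\,\xi\xi^{*}=D^{1/2}\big(Id+t\,\eta\eta^{*}\big)D^{1/2}$ with $\eta=D^{-1/2}\xi$, and apply Theorem \ref{A-T-1}(ii) with $C=D^{1/2}$ together with the fact that $Id+t\,\eta\eta^{*}$ has eigenvalues $1$ (multiplicity $n-1$) and $1+t\,\eta^{*}\eta$, hence Moore determinant $1+t\,\eta^{*}\eta$; this yields $\mathrm{det}(\Lambda+u\,Id+t\,\xi\xi^{*})=\prod_{p}(\lambda_{p}+u)+t\sum_{r}|\xi_{r}|^{2}\prod_{p\neq r}(\lambda_{p}+u)$, valid for all $u$ by polynomial continuation, whose coefficient of $t$ is $\sum_{r}|\xi_{r}|^{2}\sum_{j=0}^{n-1}\sigma_{j}(\lambda|r)\,u^{\,n-1-j}$. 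Matching the $t$-coefficients and then the powers of $u$ gives
\begin{equation*}
\sigma_{j}(A|i)=\sum_{r=1}^{n}|\xi_{r}|^{2}\,\sigma_{j}\big(\lambda(A)\,|\,r\big),\qquad 0\le j\le n-1.
\end{equation*}

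Now the proof finishes at once. If $k=1$ there is nothing to prove since $\Gamma_{0}=\mathbb{R}^{n-1}$. If $k\ge2$, then for each $j$ with $1\le j\le k-1$ we have $\lambda(A)\in\Gamma_{k}\subseteq\Gamma_{j+1}$ and $j+1\ge2$, so \eqref{initial} at level $j+1$ with $l=1$ gives $\sigma_{j}(\lambda(A)|r)>\sigma_{j+1}(\lambda(A))/\sigma_{1}(\lambda(A))>0$ for every $r$. Since $|\xi_{r}|^{2}\ge0$ and $\sum_{r}|\xi_{r}|^{2}=1$, the identity forces $\sigma_{j}(A|i)>0$ for $1\le j\le k-1$, that is, $\lambda(A|i)\in\Gamma_{k-1}$.

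The main obstacle is the identity itself: since the Moore determinant is defined only for hyperhermitian matrices, the matrix determinant lemma step must be carried out through the symmetric factorization $D^{1/2}(Id+t\,\eta\eta^{*})D^{1/2}$ and Theorem \ref{A-T-1}(ii), rather than by any naive multiplicativity of $\mathrm{det}$; once this is done, everything else is bookkeeping with elementary symmetric functions. A slightly softer alternative is to establish quaternionic Cauchy interlacing between $\lambda(A|i)$ and $\lambda(A)$ by viewing $(A|i)^{\mathbb{R}}$ as a codimension-four principal submatrix of $A^{\mathbb{R}}$ and combining it with the elementary fact that any vector interlacing an element of $\Gamma_{k}$ lies in $\Gamma_{k-1}$; but the diagonalization identity seems cleanest.
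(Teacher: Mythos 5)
Your argument is correct, and it takes a genuinely different route from the paper. The paper fixes $i=n$, diagonalizes the \emph{minor} $A|n$, writes $A$ in block form, reads off $\sigma_k(A)=\sigma_k(\mu)+a_{nn}\sigma_{k-1}(\mu)-\sum_i|\alpha_i|^2\sigma_{k-2}(\mu|i)$ from Remark \ref{R-1}, and then identifies $\sigma_{k-1}(\mu)$ with $\partial\sigma_k(A)/\partial a_{nn}$, whose positivity is extracted from a perturbation argument for the eigenvalues combined with the ellipticity of $\sigma_k$ (the same computation, run for each $j\le k$ using $\Gamma_k\subset\Gamma_j$, gives the remaining symmetric functions). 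You instead diagonalize $A$ itself and prove the averaging identity
\begin{equation*}
\sigma_{j}(A|i)=\sum_{r=1}^{n}|\xi_{r}|^{2}\,\sigma_{j}\big(\lambda(A)|r\big),\qquad \sum_{r}|\xi_{r}|^{2}=1,
\end{equation*}
via the $t$-linear term of $\mathrm{det}(A+u\,Id+t\,e_ie_i^t)$, computed once through the expansion \eqref{P-1} and once through Theorem \ref{A-T-1}(ii) and the rank-one factorization; positivity of all $\sigma_j(A|i)$, $1\le j\le k-1$, then drops out simultaneously from \eqref{initial}. Your care with the matrix determinant lemma (routing the rank-one update through the symmetric factorization $D^{1/2}(Id+t\,\eta\eta^*)D^{1/2}$ rather than any multiplicativity of the Moore determinant, and extending from $D>0$ by polynomial continuation) is exactly the right way to handle the quaternionic setting. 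What each approach buys: yours yields a clean convex-combination formula that is the exact quaternionic analogue of the trace identity the paper uses to prove \eqref{G-ii}, and it delivers the whole chain $\Gamma_k\to\Gamma_{k-1}$ in one stroke; the paper's block computation, while requiring the extra eigenvalue-perturbation step, produces the explicit formula for $\sigma_k(A)$ in terms of the minor that is reused immediately afterwards in the proof of Proposition \ref{A-i}, so the two proofs are optimized for different downstream uses.
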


\begin{proof}
Without loss of generality, we only prove $i=n$ and assume
$A|n=\mathrm{diag}\{\mu_1, ..., \mu_{n-1}\}$.
Then, we write
\begin{equation}
A=\left(
\begin{array}{cccc}
A|n & \alpha \\
\overline{\alpha}^t & a_{nn} \\
\end{array}
\right),
\end{equation}
where $\alpha=(\alpha_1, ..., \alpha_{n-1})^t$. Then, we get from Remark \ref{R-1}
\begin{eqnarray*}
\sigma_k(A)&=&\sigma_k(\mu)+a_{nn}\sigma_{k-1}(\mu)
-\sum_{i=1}^{n-1}|\alpha_i|^2\sigma_{k-2}(\mu|i),
\end{eqnarray*}
where $\mu=(\mu_1, ..., \mu_{n-1})$, and
the eigenvalues of $A$ satisfy
\begin{eqnarray*}
\Big(\lambda-a_{nn}-\sum_{i=1}^{n-1}\frac{|\alpha_i|^2}{\lambda-\mu_i}\Big)\prod_{i=1}^{n-1}(\lambda-\mu_i)=0,
\end{eqnarray*}
which implies $\frac{\partial \lambda_l}{\partial a_{nn}}=0$ for $1\leq l\leq n-1$ and
$\frac{\partial \lambda_n}{\partial a_{nn}}=\frac{a_{nn}}{1+\sum_{i=1}^{n-1}\frac{|\alpha_i|^2}{(\lambda-\mu_i)^2}}>0$.
Thus, we have from Proposition \ref{G-i}
\begin{eqnarray*}
\sigma_{k-1}(\mu)=\frac{\partial \sigma_k(A)}{\partial a_{nn}}=
\frac{\partial \sigma_k(A)}{\partial \lambda_l}\frac{\partial \lambda_l}{\partial a_{nn}}>0.
\end{eqnarray*}
\end{proof}

\begin{proposition}\label{A-i}
Assume $\lambda(A) \in \Gamma_k$ and $1\leq l<k\leq n$, then for any $i \in \{1, 2, ..., n\}$ we have
\begin{eqnarray}\label{initial-1}
\frac{\sigma_{k-1}(A|i)}{\sigma_{l-1}(A|i)}>\frac{\sigma_{k}(A)}{\sigma_{l}(A)}.
\end{eqnarray}
\end{proposition}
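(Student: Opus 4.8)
The statement \eqref{initial-1} is the hyperhermitian-matrix counterpart of the scalar inequality \eqref{initial}, and the plan is to reprise that proof, with the diagonal entries of $A$ playing the role of the variables $\lambda_i$. By relabelling we may assume $i=n$; put $B:=A|n$, so that $\lambda(B)\in\Gamma_{k-1}$ by Proposition \ref{P-2}. The key preliminary is the matrix analogue of the splitting identity \eqref{2.1-1}, namely $\sigma_j(A+sE_{nn})=\sigma_j(A)+s\,\sigma_{j-1}(B)$ for all $0\le j\le n$ and $s\in\mathbb{R}$, where $E_{nn}$ is the real matrix whose only nonzero entry is a $1$ in position $(n,n)$; one obtains it from Remark \ref{R-1} by writing $\sigma_j(A)$ as the sum of the $j\times j$ principal minors of $A$ and applying \eqref{P-1} to each of the minors that involve the index $n$. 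Writing $A_s:=A+sE_{nn}$, this identity turns \eqref{initial-1} into the assertion that the M\"obius function $s\mapsto \sigma_k(A_s)/\sigma_l(A_s)=\big(\sigma_k(A)+s\,\sigma_{k-1}(B)\big)/\big(\sigma_l(A)+s\,\sigma_{l-1}(B)\big)$ takes, at $s=+\infty$, the value $\sigma_{k-1}(B)/\sigma_{l-1}(B)$, which is strictly larger than its value $\sigma_k(A)/\sigma_l(A)$ at $s=0$.

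I would first note that $\lambda(A_s)\in\Gamma_k$ for every $s\ge 0$: since $\lambda(A)\in\Gamma_k$ we have $\sigma_j(A)>0$ for $j\le k$, and since $\lambda(B)\in\Gamma_{k-1}$ (recall $\sigma_0=1$) we have $\sigma_{j-1}(B)>0$ for $j\le k$, so the displayed identity gives $\sigma_j(A_s)>0$ for all $j\le k$ and $s\ge 0$; in particular (this uses $1\le l<k$) the denominators $\sigma_l(A_s)$ and the limit denominator $\sigma_{l-1}(B)$ are positive. It then remains to show that $s\mapsto \sigma_k(A_s)/\sigma_l(A_s)$ is strictly increasing on $[0,\infty)$. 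One clean route: passing to realization matrices, $A_{s_2}^{\mathbb{R}}-A_{s_1}^{\mathbb{R}}=(s_2-s_1)E_{nn}^{\mathbb{R}}\succeq 0$ for $s_2>s_1$, so by Weyl's monotonicity principle the ordered eigenvalues of $A_s^{\mathbb{R}}$ — equivalently, by the multiplicity-four fact recalled in the proof of Proposition \ref{A-concave}, the ordered eigenvalues $\lambda_p(A_s)$ — are non-decreasing in $s$, while $\sum_p\lambda_p(A_s)=\TR(A_s)=\TR(A)+s$ is strictly increasing, so at least one eigenvalue increases strictly. Interpolating linearly between $\lambda(A_{s_1})$ and $\lambda(A_{s_2})$, which stays in the convex cone $\Gamma_k$ by the previous observation, and using the strict ellipticity $\partial(\sigma_k/\sigma_l)/\partial\lambda_p>0$ on $\Gamma_k$ from Proposition \ref{Ell}, one concludes that $\sigma_k/\sigma_l$ increases strictly; letting $s\to\infty$ yields $\sigma_{k-1}(B)/\sigma_{l-1}(B)>\sigma_k(A)/\sigma_l(A)$, i.e. \eqref{initial-1}.

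An alternative that stays even closer to the proof of Proposition \ref{P-2} is to argue infinitesimally: after a block-diagonal symplectic change of basis — harmless by Theorem \ref{A-T-1}(ii) and Remark \ref{R-1}, since it changes neither $\sigma_j(A)$ nor $\sigma_j(B)$ — one may take $B=\mathrm{diag}\{\mu_1,\dots,\mu_{n-1}\}$; then the eigenvalues of $A$ depend on $a_{nn}$ with $\partial\lambda_p/\partial a_{nn}\ge 0$, $\sum_p\partial\lambda_p/\partial a_{nn}=1$, and $\partial\sigma_j(A)/\partial a_{nn}=\sigma_{j-1}(\mu)=\sigma_{j-1}(B)$, and equating the quotient-rule and chain-rule expressions for $\partial(\sigma_k(A)/\sigma_l(A))/\partial a_{nn}$ while invoking Proposition \ref{Ell} again gives $\sigma_{k-1}(B)\,\sigma_l(A)>\sigma_k(A)\,\sigma_{l-1}(B)$, whence \eqref{initial-1} after dividing by the positive numbers $\sigma_l(A)$ and $\sigma_{l-1}(B)$. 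I do not expect a genuine obstacle here: the whole content is (i) producing the splitting identity $\sigma_j(A+sE_{nn})=\sigma_j(A)+s\,\sigma_{j-1}(A|n)$, for which \eqref{P-1} and Remark \ref{R-1} are precisely the right tools, and (ii) transferring eigenvalue monotonicity to the realization $A^{\mathbb{R}}$, which is routine; the one point demanding a little care is extracting a \emph{strict} inequality, which is exactly why the strict positivity in Proposition \ref{Ell} — together with the strictly increasing trace — is used in place of bare ellipticity.
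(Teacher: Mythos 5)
Your argument is correct, but it takes a genuinely different route from the paper. The paper's proof diagonalizes $A|n$ (via Theorem \ref{A-T-1}), writes $\sigma_j(A)=\sigma_j((\mu,a_{nn}))-\sum_i|\alpha_i|^2\sigma_{j-2}(\mu|i)$ with $\mu=\lambda(A|n)$, and uses Proposition \ref{P-2} together with the Newton--MacLaurin inequality \eqref{NM} to show that discarding the nonnegative off-diagonal corrections can only increase the quotient, i.e. $\sigma_k(A)/\sigma_l(A)\le\sigma_k((\mu,a_{nn}))/\sigma_l((\mu,a_{nn}))$; since $(\mu,a_{nn})|n=\mu$, the matrix inequality \eqref{initial-1} then drops out of the scalar inequality \eqref{initial} applied to the vector $(\mu,a_{nn})$. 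You instead deform $A$ along $A_s=A+sE_{nn}$, prove the exact splitting $\sigma_j(A_s)=\sigma_j(A)+s\,\sigma_{j-1}(A|n)$ from \eqref{P-1} and Remark \ref{R-1} (this is correct, and is the matrix analogue of \eqref{2.1-1}), and recast \eqref{initial-1} as strict monotonicity in $s$ of the resulting M\"obius function, which you obtain from Weyl eigenvalue monotonicity applied to $A_s^{\mathbb{R}}$, convexity of $\Gamma_k$, and the strict ellipticity of Proposition \ref{Ell}; your second, infinitesimal variant is just the derivative of this at $s=0$ and mirrors the computation in the proof of Proposition \ref{P-2}. The paper's route is shorter because it leans on the already-established scalar case \eqref{initial} and on \eqref{NM}; yours is more self-contained, using neither \eqref{initial} nor \eqref{NM} and in fact reproving the scalar case as the diagonal instance, at the cost of importing Weyl's monotonicity principle (standard, but not stated in the paper) and, in the infinitesimal variant, the usual care about differentiability of repeated eigenvalues, which your integrated first variant cleanly avoids.
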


\begin{proof}
Without loss of generality, we only prove $i=n$ and assume $A|n=\mathrm{diag}\{\mu_1, ..., \mu_{n-1}\}$.
Then, we write
\begin{equation}
A=\left(
\begin{array}{cccc}
A|n & \alpha \\
\overline{\alpha}^t & a_{nn} \\
\end{array}
\right),
\end{equation}
where $\alpha=(\alpha_1, ..., \alpha_{n-1})^t$. Then,
\begin{eqnarray*}
\sigma_k(A)=\sigma_k((\mu, a_{nn}))
-\sum_{i=1}^{n-1}|\alpha_i|^2\sigma_{k-2}(\mu|i),
\end{eqnarray*}
where $\mu=(\mu_1, ..., \mu_{n-1})$.
Applying Proposition \ref{P-2}, the inequalities \eqref{NM} and \eqref{initial}, we get
\begin{eqnarray*}
\frac{\sigma_{k}(A)}{\sigma_{l}(A)}\leq\frac{\sigma_{k}((\mu, a_{nn}))}{\sigma_{l}((\mu, a_{nn}))}.
\end{eqnarray*}
Thus, the inequality \eqref{initial} follows from \eqref{initial}.
\end{proof}

\begin{proposition}
For $1\leq k\leq n$, assume $\lambda=(\lambda_1, ..., \lambda_n) \in \Gamma_k$ with $\lambda_1\leq ...\leq \lambda_n$
and $\lambda(B)=(\mu_1, ..., \mu_n) \in \Gamma_k$ with $\mu_1\leq ...\leq\mu_n$, then we have
\begin{eqnarray}\label{G-ii}
\sum_{i=1}^{n}B_{ii}\sigma_{k-1}(\lambda|i)\geq\sum_{i=1}^{n}\mu_{i}\sigma_{k-1}(\lambda|i)>0.
\end{eqnarray}
\end{proposition}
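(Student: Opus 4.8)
The plan is to split \eqref{G-ii} into the strict positivity on the right, which is immediate, and the inequality on the left, which reduces to a Schur--Horn type majorization combined with an elementary rearrangement/Abel summation argument.

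The right-hand strict inequality $\sum_{i=1}^{n}\mu_i\sigma_{k-1}(\lambda|i)>0$ is nothing but the G{\aa}rding inequality \eqref{G-i}, applicable since both $\mu=\lambda(B)\in\Gamma_k$ and $\lambda\in\Gamma_k$; so there is nothing further to do there.

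For the left-hand inequality I would first record the monotonicity of the coefficients $c_i:=\sigma_{k-1}(\lambda|i)$ along the ordering $\lambda_1\leq\cdots\leq\lambda_n$. The case $k=1$ is trivial since then $c_i\equiv 1$ and the claim is just $\sum_i B_{ii}=\sum_i\mu_i$, so assume $k\geq 2$. Applying Proposition \ref{P-2} to the (real, hence hyperhermitian) matrix $\mathrm{diag}(\lambda)$, and then once more, gives $\lambda|i\in\Gamma_{k-1}$ and $\lambda|i,j\in\Gamma_{k-2}$, so $\sigma_{k-1}(\lambda|i)>0$ and $\sigma_{k-2}(\lambda|i,j)>0$ for all $i\neq j$. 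Expanding $\sigma_{k-1}(\lambda|i)$ and $\sigma_{k-1}(\lambda|j)$ by \eqref{2.1-1} in the variable that distinguishes them yields $\sigma_{k-1}(\lambda|i)-\sigma_{k-1}(\lambda|j)=(\lambda_j-\lambda_i)\sigma_{k-2}(\lambda|i,j)$, whence $c_1\geq c_2\geq\cdots\geq c_n>0$.

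Next I would symplectically diagonalize $B$ via Theorem \ref{A-T-1}(iii): write $B=C^*\,\mathrm{diag}(\mu_1,\dots,\mu_n)\,C$ with $C^*C=\mathrm{Id}$, which for a square quaternionic matrix forces $CC^*=\mathrm{Id}$ as well. Then $B_{ii}=\sum_{j=1}^{n}|C_{ji}|^2\mu_j$, and the matrix $S=(|C_{ji}|^2)_{i,j}$ has all row sums and all column sums equal to $1$, i.e.\ it is doubly stochastic; hence, by the Hardy--Littlewood--P\'olya theorem, the diagonal vector $(B_{11},\dots,B_{nn})$ is majorized by $(\mu_1,\dots,\mu_n)$. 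Concretely, writing $d_1\leq\cdots\leq d_n$ for the increasing rearrangement of the diagonal entries, one has $\sum_{i=1}^{m}B_{ii}\geq d_1+\cdots+d_m\geq\mu_1+\cdots+\mu_m$ for every $m$, with equality at $m=n$. Plugging these partial-sum estimates into the Abel identity $\sum_{i}B_{ii}c_i=c_n\sum_{i}B_{ii}+\sum_{m=1}^{n-1}(c_m-c_{m+1})\sum_{i=1}^{m}B_{ii}$ and using the non-negativity of the increments $c_m-c_{m+1}$ established above gives $\sum_{i=1}^{n}B_{ii}c_i\geq\sum_{i=1}^{n}\mu_i c_i$, which together with the G{\aa}rding bound is \eqref{G-ii}.

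I expect the only genuinely delicate point to be the quaternionic Schur--Horn step: one must verify that $C^*C=\mathrm{Id}$ really does yield the double stochasticity of $S$ (this is exactly where $CC^*=\mathrm{Id}$ is used), and that the Abel summation is set up so that the non-increasing weights $c_m$ are paired against the increasingly ordered $\mu_i$, so that every term in the resulting sum is non-negative; everything else is routine.
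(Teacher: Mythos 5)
Your proposal is correct and follows essentially the same route as the paper: symplectic diagonalization of $B$ via Theorem \ref{A-T-1}(iii), the observation that $(B_{11},\dots,B_{nn})$ is the image of $(\mu_1,\dots,\mu_n)$ under the doubly stochastic matrix $(|c_{ij}|^2)$, and then the monotonicity of $\sigma_{k-1}(\lambda|i)$ combined with a rearrangement argument. The only difference is that the paper delegates this last step to Lemma 6.2 of \cite{CNS85}, whereas you spell out the majorization and Abel summation explicitly (and correctly).
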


\begin{proof}
For any hyperhermitian matrix $B$, we can find $C \in GL(n, \mathbb{H})$ with $C^*C=Id$ such that
\begin{eqnarray*}
C^*B C=\mathrm{diag}\{\mu_1, \mu_2, ..., \mu_n\},
\end{eqnarray*}
Then,
\begin{eqnarray*}
\mathrm{diag}\{B_{11}, B_{22}, ..., B_{nn}\}=T\mathrm{diag}\{\mu_1, \mu_2, ..., \mu_n\},
\end{eqnarray*}
where $T=(|c_{ij}|^2)$. Then, the conclusion follows from the same argument Lemma 6.2 in \cite{CNS85}.
\end{proof}

\subsection{Hessian quotient operators on HKT manifolds}

Let $(M, I, J, K, g)$ be a HKT manifold and $\Omega$ be the associated HKT form of the
hyperhermitian metric $g$.
For each $z \in M$, there exists a unit orthonormal
basis $e_1, \overline{e_1}J, ..., e_n, \overline{e_n}J$ of $T^{1, 0}_{z}M$, i.e.
\begin{eqnarray*}\label{2.22}
\Omega(e_i, e_j)=0, \quad \Omega(e_i, \overline{e_j}J)=\delta_{ij}.
\end{eqnarray*}
Let $\{e^i\}$ be the dual basis of $\{e_j\}$, i.e. $e^i(e_j)=\delta_{ij}$. Then,
\begin{eqnarray*}
\Omega=e^1\wedge J^{-1}(\overline{e^1})+...+e^n\wedge J^{-1}(\overline{e^n}).
\end{eqnarray*}
Given $W \in \Lambda^2_{I, \mathbb{R}}(M)$, we can decompose
\begin{eqnarray*}
W=\sum_{i, j=1}^{n}w_{i\overline{j}}e^i\wedge J^{-1}(\overline{e^j}),
\end{eqnarray*}
where $(w_{i\overline{j}})$ is a hyperhermitian $(n \times n)$-matrix, i.e. $\overline{w_{i\overline{j}}}=w_{j\overline{i}}$.

\begin{definition}
For $W \in \Lambda^2_{I, \mathbb{R}}(M)$, we define $\sigma_k(W)$ with
respect to $\Omega$ as
\begin{equation*}
\sigma_k(W)=\sigma_k(\lambda(w_{i\overline{j}})),
\end{equation*}
where $\lambda(w_{i\overline{j}})$ are eigenvalues of the hyperhermitian $(n \times n)$-matrix $(w_{i\overline{j}})$.
The definition of $\sigma_k(W)$ is independent of the choice of the unit orthonormal
basis $e_1, \overline{e_1}J, ..., e_n, \overline{e_n}J$ of $TM$. In fact, $\sigma_k$ can be
defined without the use of the unit orthonormal
basis by (see Proposition 4.5 in \cite{Ale17})
\begin{equation*}
\sigma_{k}(W)=C_{n}^{k}\frac{W^k\wedge \Omega^{n-k}}{\Omega^n},
\end{equation*}
where $C_{n}^{k}=\frac{n!}{(n-k)!k!}$.
We also define the G{\aa}rding's cone on $M$ by
\begin{equation*}
\Gamma_k(M)=\{ W  \in \Lambda^{2, 0}_{I, \mathbb{R}}(M):\sigma _i (W)>0, \ \forall \ 1 \le i \le k\}.
\end{equation*}
\end{definition}

Thus, we can rewrite the equation \eqref{K-eq} as
\begin{eqnarray}\label{Keq-1}
\sigma_{k}(\Omega_u)
=\widetilde{F}\sigma_{l}(\Omega_u),
\end{eqnarray}
where $\widetilde{F}=\frac{C_{n}^{k}}{C_{n}^{l}}e^F$.
Under the unit orthonormal
basis $e_1, J\overline{e_1}, ..., e_n, J\overline{e_n}$ of $T^{1, 0}_{z}M$, it is easy to see
\begin{eqnarray}\label{n-1}
&&W^{i-1}\wedge \Omega^{n-i}\wedge e^l\wedge J^{-1}(\overline{e^l})
\nonumber \\&=&(i-1)!(n-i)!\sigma_{i-1}(W|l)\Omega^n,
\end{eqnarray}
where $W|l$ is the matrix obtained by deleting the $l$-row and $l$-column of the matrix $(w_{i\overline{j}})$.
Then, we can get a local version of the cone condition \eqref{cone}.
\begin{proposition}\label{cone-local}
The cone condition \eqref{cone} is equivalent to
\begin{eqnarray}\label{cone-1}
\sigma_{k-1}(\Omega_0|j)
-\widetilde{F}\sigma_{l-1}(\Omega_0|j)>0, \quad j=1, 2, ..., n,
\end{eqnarray}
where $\widetilde{F}=\frac{C_{n}^{k}}{C_{n}^{l}}e^F$.
\end{proposition}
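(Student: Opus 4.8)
The plan is to argue pointwise, so I would fix $z\in M$ and first unwind the meaning of the inequality \eqref{cone} between q-real $(2n-2,0)$-forms. Writing $\Theta:=k\Omega_{0}^{k-1}\wedge\Omega^{n-k}-le^{F}\Omega_{0}^{l-1}\wedge\Omega^{n-l}$, the condition \eqref{cone} says that $\Theta$ is a positive $(2n-2,0)$-form, i.e. $\Theta\wedge\beta$ is a positive multiple of $\Omega^{n}$ for every positive q-real $(2,0)$-form $\beta$. The first step is to reduce this to the rank-one test forms: applying the symplectic diagonalization of Theorem \ref{A-T-1}(iii) to the hyperhermitian matrix of $\beta$, every positive $\beta$ can be written $\beta=\sum_{i=1}^{n}b_{i}\,f^{i}\wedge J^{-1}(\overline{f^{i}})$ with all $b_{i}>0$ and $\{f^{i}\}$ a unit orthonormal coframe (the new coframe is still $\Omega$-orthonormal because the change of basis $C$ satisfies $C^{*}C=Id$), so $\Theta\wedge\beta=\sum_{i}b_{i}\,\Theta\wedge f^{i}\wedge J^{-1}(\overline{f^{i}})$; hence positivity of $\Theta$ is equivalent to $\Theta\wedge e^{j}\wedge J^{-1}(\overline{e^{j}})$ being a positive multiple of $\Omega^{n}$ for every $j\in\{1,\dots,n\}$ and every unit orthonormal coframe $\{e^{i}\}$ at $z$.

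The second step is the computation. Wedging \eqref{cone} with $e^{j}\wedge J^{-1}(\overline{e^{j}})$ and invoking the local identity \eqref{n-1} with $W=\Omega_{0}$, once with $i=k$ and once with $i=l$, the two sides become $k!\,(n-k)!\,\sigma_{k-1}(\Omega_{0}|j)\,\Omega^{n}$ and $l!\,(n-l)!\,e^{F}\,\sigma_{l-1}(\Omega_{0}|j)\,\Omega^{n}$. Comparing the coefficients of $\Omega^{n}$, \eqref{cone} is thus equivalent to $k!\,(n-k)!\,\sigma_{k-1}(\Omega_{0}|j)>l!\,(n-l)!\,e^{F}\,\sigma_{l-1}(\Omega_{0}|j)$ for all $j$ and all coframes; dividing by $k!\,(n-k)!$ and using $\frac{l!\,(n-l)!}{k!\,(n-k)!}=\frac{C_{n}^{k}}{C_{n}^{l}}$ gives precisely $\sigma_{k-1}(\Omega_{0}|j)-\widetilde{F}\,\sigma_{l-1}(\Omega_{0}|j)>0$ with $\widetilde{F}=\frac{C_{n}^{k}}{C_{n}^{l}}e^{F}$, which is \eqref{cone-1} (the $l=0$ case is covered by reading the second term as zero, consistently with $\sigma_{-1}\equiv0$). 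Finally, since \eqref{cone} is manifestly coframe-independent, the above also shows that \eqref{cone-1}, though written in terms of a coframe, holds in one unit orthonormal coframe iff it holds in all of them, which is the form in which it will be used.

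I do not expect a genuine obstacle here: the only step that needs any care is the first one, namely identifying positivity of the $(2n-2,0)$-form $\Theta$ with the collection of scalar inequalities obtained by pairing $\Theta$ against coframe directions. This is a purely linear-algebraic unwinding resting on the symplectic diagonalization (Theorem \ref{A-T-1}(iii)) and the formula \eqref{n-1}, both already assembled in Section 2; everything after it is the routine bookkeeping of the binomial constants $C_{n}^{k}$ and $C_{n}^{l}$.
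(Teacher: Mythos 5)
Your argument is correct and is essentially the paper's own proof: both wedge the cone inequality with $e^{j}\wedge J^{-1}(\overline{e^{j}})$, invoke the identity \eqref{n-1} for $i=k$ and $i=l$, and track the constants via $k!\,(n-k)!=n!/C_{n}^{k}$ to arrive at \eqref{cone-1}. The only difference is that you spell out the reduction of positivity of the $(2n-2,0)$-form to the rank-one test forms via the symplectic diagonalization of Theorem \ref{A-T-1}(iii), a step the paper leaves implicit by speaking directly of the coefficient of $\prod_{j\neq s}e^{j}\wedge J^{-1}(\overline{e^{j}})$.
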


\begin{proof}
The equality \eqref{cone-1} follows directly from \eqref{n-1} if we
observe that coefficient of $(n-1,0)$ form
$\prod_{j=1, j\neq s}^{n}e^j\wedge J^{-1}(\overline{e^j})$
in $\Omega_{0}^{i-1}\wedge \Omega^{n-i}$ is
\begin{eqnarray*}
(i-1)!(n-i)!\sigma_{i-1}(\Omega_0|s)=\frac{1}{i}\frac{n!}{C_{n}^{i}}\sigma_{i-1}(\Omega_0|s).
\end{eqnarray*}
\end{proof}

\section{$C^{0}$ estimate}

In the section, for convenience, we always assume that $(M^n, I, J, K, g)$ is a
compact HKT manifold, and $\Omega$ is the associated HKT form of the
hyperhermitian metric $g$. Moreover, $\Omega_0$ is a q-real $(2, 0)$ form with $\Omega_0 \in \Gamma_k(M)$ and $\partial\Omega_0=0$, and $u \in C^2(M)$
is a real function with $\Omega_u=\Omega_0+\partial\partial_J u \in \Gamma_k(M)$.

\subsection{Some lemmas}

Since $\Omega_0 \in \Gamma_{k}(M)$, we may assume that there is a uniform constant $\tau>0$ such that
\begin{eqnarray}\label{gama}
\Omega_0-\epsilon\Omega \in \Gamma_{k}(M) \quad \mbox{and} \quad \Omega-\epsilon\Omega_0 \in \Gamma_{k}(M).
\end{eqnarray}

Then, We have the following pointwise inequalities.
\begin{lemma}
(1) For $0\leq t\leq 1$ and $1\leq i\leq k$, it holds
\begin{eqnarray}\label{C0-le-1}
\Omega_{tu}^{i-1}\wedge \omega^{n-i}\geq (1-t)^{i-1}
\epsilon^{i-1}\Omega^{n-1}.
\end{eqnarray}

(2) For $0<t\leq 1$ and $1\leq i\leq k$, it holds
\begin{eqnarray}\label{C0-le-2}
\Omega_{u}^{i}\wedge \Omega^{n-i}\leq \frac{1}{t^i}
\Omega^{i}_{tu}\wedge \Omega^{n-i}.
\end{eqnarray}
Moreover, if $u$ is a solution to the equation \eqref{K-eq} and $\Omega_0$ satisfies the cone condition \eqref{cone},
then there exists a uniform constant $C>0$ such that for $0\leq t\leq 1$
\begin{eqnarray}\label{C0-le-3}
&&k\Omega_{tu}^{k-1}\wedge \Omega^{n-k}-le^F\Omega_{tu}^{l-1}\wedge \Omega^{n-l}\\ \nonumber&>&C(1-t)
\Omega_{tu}^{l-1}\wedge \Omega^{n-l},
\end{eqnarray}
and consequently
\begin{eqnarray}\label{C0-le-4}
&&k\Omega_{tu}^{k-1}\wedge \Omega^{n-k}-le^F\Omega_{tu}^{l-1}\wedge \Omega^{n-l}\\ \nonumber&>&C\epsilon^{l-1}(1-t)^{l}
\Omega^{n-1}.
\end{eqnarray}
\end{lemma}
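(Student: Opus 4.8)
\section*{Proof proposal}

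The plan rests on the affine identity
\[
\Omega_{tu}=\Omega_0+t\,\partial\partial_J u=(1-t)\Omega_0+t\,\Omega_u ,
\]
valid since $\partial\partial_J u=\Omega_u-\Omega_0$. Because $\Gamma_k(M)$ is a convex cone and $\Omega_0,\Omega_u\in\Gamma_k(M)$, this gives $\Omega_{tu}\in\Gamma_k(M)$ for $t\in[0,1]$ and, using \eqref{gama}, also $\Omega_{tu}-(1-t)\epsilon\Omega=(1-t)(\Omega_0-\epsilon\Omega)+t\,\Omega_u\in\Gamma_k(M)$. I read an inequality between q-real $(2n-2,0)$-forms as: the difference wedged with any q-positive $(2,0)$-form is a nonnegative multiple of $\Omega^n$. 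Since, at a point, such a $(2,0)$-form equals $\sum_l v_l\,e^l\wedge J^{-1}(\overline{e^l})$ with $v_l\ge0$ in a suitable $\Omega$-orthonormal frame, \eqref{n-1} reduces \eqref{C0-le-1}, \eqref{C0-le-3}, \eqref{C0-le-4} to pointwise inequalities for $\sigma_\bullet$ of the principal $(n-1)$-submatrices of the hyperhermitian matrices of $\Omega_{tu},\Omega_0,\Omega_u$ in an arbitrary such frame; \eqref{C0-le-2} is instead a statement about the top forms $\sigma_i(\cdot)\,\Omega^n$.

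For \eqref{C0-le-1}, fix a frame and let $\mu$ be the eigenvalues of the matrix of $\Omega_{tu}$: then $\mu\in\Gamma_k$ and $\mu-(1-t)\epsilon\mathbf{1}\in\Gamma_k$, so by Proposition \ref{P-2} their $l$-th restrictions lie in $\Gamma_{k-1}$, i.e. $\mu|l=\nu+(1-t)\epsilon\mathbf{1}$ with $\nu\in\Gamma_{k-1}$. Expanding
\[
\sigma_{i-1}\big(\nu+(1-t)\epsilon\mathbf{1}\big)=\sum_{p=0}^{i-1}\binom{n-1-p}{i-1-p}\big((1-t)\epsilon\big)^{i-1-p}\sigma_p(\nu),
\]
every term is nonnegative since $i-1\le k-1$, so $\sigma_{i-1}(\mu|l)\ge\binom{n-1}{i-1}\big((1-t)\epsilon\big)^{i-1}$; comparing with the analogous evaluation of $\Omega^{n-1}=\Omega^{i-1}\wedge\Omega^{n-i}$ via \eqref{n-1} yields \eqref{C0-le-1}. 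For \eqref{C0-le-2}: since $\sigma_i^{1/i}$ is concave on $\Gamma_i$ (Proposition \ref{A-concave} with $l=0$) and $\Omega_0,\Omega_u\in\Gamma_k(M)\subset\Gamma_i(M)$,
\[
\sigma_i(\Omega_{tu})^{1/i}\ge(1-t)\,\sigma_i(\Omega_0)^{1/i}+t\,\sigma_i(\Omega_u)^{1/i}\ge t\,\sigma_i(\Omega_u)^{1/i},
\]
so $\sigma_i(\Omega_{tu})\ge t^i\sigma_i(\Omega_u)$, which is \eqref{C0-le-2}.

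For \eqref{C0-le-3} (with $l\ge1$), wedging with a q-positive $(2,0)$-form, applying \eqref{n-1}, and absorbing combinatorial constants into $\widetilde F=\tfrac{C_n^k}{C_n^l}e^F$, the claim reduces to: for every point, frame and index $j$, $\sigma_{k-1}(B)-\widetilde F\sigma_{l-1}(B)>c\,(1-t)\,\sigma_{l-1}(B)$ for a uniform $c>0$, where $B=(1-t)B_0+t B_u$ is the $j$-th principal $(n-1)$-submatrix of the matrix of $\Omega_{tu}$ and $B_0,B_u$ the corresponding submatrices of $\Omega_0,\Omega_u$, all in $\Gamma_{k-1}$. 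By concavity of $[\sigma_{k-1}/\sigma_{l-1}]^{1/(k-l)}$ on $\Gamma_{k-1}$ (Proposition \ref{A-concave}),
\[
\Big[\tfrac{\sigma_{k-1}(B)}{\sigma_{l-1}(B)}\Big]^{\frac1{k-l}}\ge(1-t)\Big[\tfrac{\sigma_{k-1}(B_0)}{\sigma_{l-1}(B_0)}\Big]^{\frac1{k-l}}+t\Big[\tfrac{\sigma_{k-1}(B_u)}{\sigma_{l-1}(B_u)}\Big]^{\frac1{k-l}}.
\]
Now Proposition \ref{A-i} and the equation \eqref{Keq-1} give $\tfrac{\sigma_{k-1}(B_u)}{\sigma_{l-1}(B_u)}>\tfrac{\sigma_k(\Omega_u)}{\sigma_l(\Omega_u)}=\widetilde F$, while the cone condition in the form \eqref{cone-1}---which holds in every $\Omega$-orthonormal frame since it is equivalent to the frame-free \eqref{cone}---together with compactness of $M$ and of the frame bundle gives $\tfrac{\sigma_{k-1}(B_0)}{\sigma_{l-1}(B_0)}\ge(1+\delta)\widetilde F$ for a uniform $\delta>0$. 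Plugging these in, raising to the power $k-l$, and using $(1+x)^{1/(k-l)}\ge1+\delta_1$ for $x\ge\delta/\sup_M\widetilde F$ and $(1+y)^{k-l}\ge1+(k-l)y$, one obtains $\tfrac{\sigma_{k-1}(B)}{\sigma_{l-1}(B)}\ge\widetilde F\big(1+(k-l)\delta_1(1-t)\big)$, which is the desired bound (strictness being clear, e.g. by halving $c$ for $t<1$ and using the strict endpoint inequality at $t=1$). Finally \eqref{C0-le-4} follows by inserting \eqref{C0-le-1} with $i=l$ into \eqref{C0-le-3}.

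The only genuine difficulty is \eqref{C0-le-3}: one must exploit the equation to control $\Omega_u$ at one endpoint and the cone condition to control $\Omega_0$ at the other, and it is precisely the concavity of $[\sigma_{k-1}/\sigma_{l-1}]^{1/(k-l)}$ along the segment from $\Omega_0$ to $\Omega_u$ that interpolates the two strict endpoint inequalities and produces the crucial gap linear in $(1-t)$. A secondary point needing care is that \eqref{cone-1} must hold uniformly over all $\Omega$-orthonormal frames, not merely one, so that the conclusion is a true inequality of $(2n-2,0)$-forms; this is where frame-independence of \eqref{cone} and compactness enter.
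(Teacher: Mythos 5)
Your proposal is correct and follows essentially the same route as the paper: convexity of $\Gamma_k$ along the segment $\Omega_{tu}=(1-t)\Omega_0+t\Omega_u$, concavity of $\sigma_i^{1/i}$ for \eqref{C0-le-2}, and for the key estimate \eqref{C0-le-3} the concavity of $\bigl[\sigma_{k-1}(\cdot|j)/\sigma_{l-1}(\cdot|j)\bigr]^{1/(k-l)}$ on principal submatrices combined with Proposition \ref{A-i} plus the equation at the $\Omega_u$ endpoint and the cone condition \eqref{cone-1} at the $\Omega_0$ endpoint, then \eqref{C0-le-4} by inserting \eqref{C0-le-1}. The only deviations are cosmetic: you prove \eqref{C0-le-1} by a binomial expansion of $\sigma_{i-1}$ of a shifted submatrix instead of the paper's iterated concavity of $\sigma_{i-1}^{1/(i-1)}$, and you are in fact more careful than the paper in converting between the $(k-l)$-th-root inequality and the ratio inequality when extracting the $(1-t)$-linear gap.
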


\begin{proof}
For $1\leq i\leq k$ and $1\leq j\leq n$, we can find from Propositions \ref{A-concave} and \ref{P-2}
\begin{eqnarray}\label{C000-1}
\sigma_{i}^{\frac{1}{i}}(\Omega_{tu})
\geq (1-t)\sigma_{i}^{\frac{1}{i}}(\Omega_0)+t\sigma_{i}^{\frac{1}{i}}(\Omega_u)\geq t\sigma_{i}^{\frac{1}{i}}(\Omega_u),
\end{eqnarray}
\begin{eqnarray}\label{C000-2}
\sigma_{i-1}^{\frac{1}{i-1}}(\Omega_{tu}|j)
&\geq& (1-t)\sigma_{i-1}^{\frac{1}{i-1}}(\Omega_0|j)+t\sigma_{i-1}^{\frac{1}{i-1}}(\Omega_u|j)
\nonumber\\&\geq& (1-t)\sigma_{i-1}^{\frac{1}{i-1}}(\Omega_0|j),
\end{eqnarray}
and
\begin{eqnarray}\label{C000-3}
\sigma_{i-1}^{\frac{1}{i-1}}(\Omega_0|j)
\geq \sigma_{i-1}^{\frac{1}{i-1}}(\Omega_0-\epsilon \Omega|j)+\epsilon\sigma_{i-1}^{\frac{1}{i-1}}(\Omega|j)
\geq \epsilon\sigma_{i-1}^{\frac{1}{i-1}}(\Omega|j).
\end{eqnarray}
Clearly,
\eqref{C0-le-2} is a consequence of \eqref{C000-1}.
Combining \eqref{C000-2} and \eqref{C000-3} gives
\begin{eqnarray*}
\sigma_{i-1}(\Omega_{tu}|j)\geq
(1-t)^{i-1}\epsilon^{i-1}\sigma_{i-1}(\Omega|j),
\end{eqnarray*}
which is just the local form of \eqref{C0-le-1}.
Now we begin to prove \eqref{C0-le-3} and \eqref{C0-le-4}.
Applying Propositions \ref{A-concave} and \ref{P-2}, we know that
\begin{eqnarray*}
f(\alpha|j):=\Big[\frac{\sigma_{k-1}(\alpha|j)}{\sigma_{l-1}(\alpha|j)}\Big]^{\frac{1}{k-l}}
\end{eqnarray*}
is concave for $\alpha$ in $\Gamma_{k}$ for any $1\leq j\leq n$. Thus,
\begin{eqnarray}\label{0-1}
f(\Omega_{tu}|j)\geq (1-t)f(\Omega_{0}|j)
+t f(\Omega_{u}|j).
\end{eqnarray}
Moreover, we have by Proposition \ref{A-i}
\begin{eqnarray}\label{0-2}
f(\Omega_{u}|i)&=&\frac{\sigma_{k-1}(\Omega_{u}|i)}{\sigma_{l-1}(\Omega_{u}|i)}
>\frac{\sigma_{k}(\Omega_{u})}{\sigma_{l}(\Omega_{u})}=\widetilde{F}.
\end{eqnarray}
In addition, since $\Omega_0$ satisfies the cone condition \eqref{cone},
there exists some uniform constant $\delta>0$ which is independent of $z$ such that
\begin{eqnarray}\label{0-3}
f(\Omega_{0}|j)>\widetilde{F}(z)+\delta,
\end{eqnarray}
where we write the cone condition in a local version as  Proposition \ref{cone-local}.
Substituting \eqref{0-2} and \eqref{0-3} into \eqref{0-1}, we get
\begin{eqnarray*}
\frac{\sigma_{k-1}(\Omega_{tu}|j)}{\sigma_{l-1}(\Omega_{tu}|j)}
>(1-t)(\widetilde{F}(z)+\delta)+t\widetilde{F}(z),
\end{eqnarray*}
this is to say
\begin{eqnarray*}
\sigma_{k-1}(\Omega_{tu}|j)
-\widetilde{F}\sigma_{l-1}(\Omega_{tu}|j)
>\delta(1-t)\sigma_{l-1}(\Omega_{tu}|j),
\end{eqnarray*}
which is just the local form of \eqref{C0-le-3}.
Then, \eqref{C0-le-3} follows by substituting \eqref{C0-le-1} into it.
\end{proof}

We recall Lemma 3 in \cite{Sro21}.
\begin{lemma}\label{3}
Let $\Omega_1$ be a strictly positive $(2, 0)$ form, i.e. $\Omega_1(X, \overline{X}J)>0$ for any non zero $(1, 0)$
vector $X$, and $\Omega_2$ a $q$-real $(2, 0)$ form on $M$. For each $z \in M$ there exists a
basis $e_1, \overline{e_1}J, ..., e_n, \overline{e_n}J$ of $T^{1, 0}_{z}M$ such that for $i\neq j$
\begin{eqnarray}\label{2.22}
\Omega_1(e_i, e_j)=\Omega_2(e_i, e_j)=\Omega_1(e_i, \overline{e_j}J)=\Omega_2(e_i, \overline{e_j}J)=0.
\end{eqnarray}
\end{lemma}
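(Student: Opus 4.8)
The plan is to recast the statement as a simultaneous diagonalization of two hyperhermitian matrices --- the first positive definite, the second arbitrary --- and then to invoke part (iii) of Theorem~\ref{A-T-1} twice. Fix $z\in M$ and pick any basis $f_1,\overline{f_1}J,\dots,f_n,\overline{f_n}J$ of $T^{1,0}_z M$ of the quaternionic form used throughout the paper. Since $\Omega_1$ is a positive $(2,0)$-form it is in particular $q$-real, so, expanding $\Omega_1$ and $\Omega_2$ in the dual coframe as recalled above, each is represented by a hyperhermitian $(n\times n)$-matrix, say $A_1$ and $A_2$; and the hypothesis $\Omega_1(X,\overline{X}J)>0$ for every nonzero $(1,0)$-vector $X$ says precisely that $A_1$ is positive definite, i.e. all entries of $\lambda(A_1)$ are positive.

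First I would normalize $\Omega_1$. By part (iii) of Theorem~\ref{A-T-1} there is $C\in GL(n,\mathbb{H})$ with $C^*C=Id$ and $C^*A_1C=\mathrm{diag}\{\lambda_1,\dots,\lambda_n\}$, with all $\lambda_i>0$ by positivity; composing the induced change of frame with the real diagonal matrix $\mathrm{diag}\{\lambda_1^{-1/2},\dots,\lambda_n^{-1/2}\}$ yields a new quaternionic frame in which $\Omega_1$ is represented by $Id$ and $\Omega_2$ by some hyperhermitian matrix $B$. Here one uses that a change of quaternionic frame by $P\in GL(n,\mathbb{H})$ transforms the hyperhermitian matrix of a $q$-real $(2,0)$-form by $A\mapsto P^*AP$, the matrix form of part (ii) of Theorem~\ref{A-T-1}. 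Applying part (iii) of Theorem~\ref{A-T-1} once more, choose $C'\in GL(n,\mathbb{H})$ with $(C')^*C'=Id$ and $(C')^*B\,C'=\mathrm{diag}\{\mu_1,\dots,\mu_n\}$; passing to the frame induced by $C'$, the form $\Omega_2$ becomes diagonal while $\Omega_1$ is still represented by $(C')^*\,Id\,C'=(C')^*C'=Id$. Renaming this final quaternionic frame $e_1,\overline{e_1}J,\dots,e_n,\overline{e_n}J$, both $\Omega_1$ and $\Omega_2$ have diagonal hyperhermitian matrices in it.

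It then remains to read off \eqref{2.22}. For any $q$-real $(2,0)$-form $W=\sum_{i,j}w_{i\overline j}\,e^i\wedge J^{-1}(\overline{e^j})$ written in the coframe dual to a quaternionic basis, evaluating on the basis vectors gives $W(e_i,e_j)=0$ for all $i,j$ and $W(e_i,\overline{e_j}J)=w_{i\overline j}$; hence $\Omega_1(e_i,e_j)=\Omega_2(e_i,e_j)=0$ holds for any quaternionic frame, while $\Omega_1(e_i,\overline{e_j}J)=\Omega_2(e_i,\overline{e_j}J)=0$ for $i\neq j$ is exactly the statement that both matrices are diagonal. This yields \eqref{2.22}.

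I expect the only genuine subtlety to be the bookkeeping in the middle step: one must check that the correspondence between a quaternionic frame and its hyperhermitian matrix intertwines a change of frame by $P\in GL(n,\mathbb{H})$ with $A\mapsto P^*AP$ --- so that the last change of frame, for which $(C')^*C'=Id$, really does leave the normalized $\Omega_1$ fixed --- and that the quaternionic type of the frame is preserved under such changes. Both are routine consequences of the expansion $W=\sum_{i,j}w_{i\overline j}\,e^i\wedge J^{-1}(\overline{e^j})$, and everything else is a direct application of Theorem~\ref{A-T-1}.
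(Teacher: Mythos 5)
Your overall strategy is the right one, and it is essentially the proof of this statement in the literature: the paper itself gives no proof, but simply cites Lemma 3 of \cite{Sro21}, whose argument is exactly the simultaneous diagonalization of a positive definite hyperhermitian matrix and an arbitrary hyperhermitian matrix via Theorem \ref{A-T-1}(iii), together with the transformation rule $A\mapsto P^*AP$ under a quaternionic change of frame. Your two-step reduction (normalize $\Omega_1$ to $Id$, then unitarily diagonalize the matrix of $\Omega_2$) is the standard way to do this and is correct.

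There is, however, one genuine error in your final ``reading off'' step: the claim that $W(e_i,e_j)=0$ holds for \emph{every} q-real $(2,0)$-form in \emph{every} quaternionic frame. This is false, and if it were true the conditions $\Omega_1(e_i,e_j)=\Omega_2(e_i,e_j)=0$ in \eqref{2.22} would be vacuous. A dimension count shows why: the q-real $(2,0)$-forms at a point form a space of real dimension $n(2n-1)$, matching hyperhermitian matrices with \emph{quaternionic} off-diagonal entries, whereas forms with only $e^i\wedge J^{-1}(\overline{e^j})$ components and a complex hermitian coefficient matrix account for only $n^2$ real parameters. The missing components are precisely the $\Omega(e_i,e_j)$ (and conjugate) ones; in the matrix picture they sit in the $j$-part of the quaternionic entries $w_{i\overline j}$, while the complex part encodes $\Omega(e_i,\overline{e_j}J)$. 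Your construction still saves the conclusion, because the frame you produce makes the hyperhermitian matrices of both forms \emph{real diagonal}, which kills the quaternionic off-diagonal entries entirely and hence forces both $\Omega(e_i,\overline{e_j}J)=0$ and $\Omega(e_i,e_j)=0$ for $i\neq j$. So the proof goes through once the last step is restated as: real-diagonality of the quaternionic matrix implies all four vanishings in \eqref{2.22}, rather than two of them being automatic.
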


Using Lemma \ref{3}, we can get the following inequality which is a generalization of Lemma 2 in \cite{Sro21}.
\begin{lemma}
Let $\alpha$ be some $(1, 0)$ form and $W$ be a q-real $(2, 0)$ form with $W \in \Gamma_i(M)$,
there exists a positive constant $C$ depending on $\alpha$ such that
\begin{eqnarray}\label{Le2}
&&\bigg| \frac{\partial_J u\wedge \alpha\wedge W^{i-1}\wedge \Omega^{n-i}}{\Omega^n}\bigg|
\nonumber\\&\leq&\frac{C}{\delta}\frac{\partial u\wedge \partial_J u\wedge W^{i-1}\wedge \Omega^{n-1-i}}{\Omega^n}
+C\delta \frac{W^{i-1}\wedge \Omega^{n-i+1}}{\Omega^n},
\end{eqnarray}
for some $\delta>0$.
\end{lemma}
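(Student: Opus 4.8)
The plan is to fix a point $z\in M$, choose there a coframe adapted simultaneously to $\Omega$ and $W$ via Lemma \ref{3}, expand the three quotients appearing in \eqref{Le2} in that coframe, and finish with a termwise Young inequality. Concretely, I would apply Lemma \ref{3} at $z$ with $\Omega_1=\Omega$ (which is strictly positive, being the HKT form of $g$) and $\Omega_2=W$; after rescaling the resulting basis so that $\Omega$ is put in standard form, this yields an $\Omega$-orthonormal coframe $e^1,\dots,e^n,\tilde e^1,\dots,\tilde e^n$ of $\Lambda^{1,0}_I$ at $z$, where $\tilde e^p:=J^{-1}(\overline{e^p})$, such that
\[
\Omega=\sum_{p=1}^{n}e^p\wedge\tilde e^p,\qquad W=\sum_{p=1}^{n}w_p\,e^p\wedge\tilde e^p,
\]
with $w=\lambda(W)=(w_1,\dots,w_n)\in\Gamma_i$.

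Next I would expand in this coframe. Writing $\partial_Ju=\sum_p a_p e^p+\sum_p a'_p\tilde e^p$, $\alpha=\sum_p\alpha_p e^p+\sum_p\alpha'_p\tilde e^p$ and $\partial u=\sum_p b_p e^p+\sum_p b'_p\tilde e^p$, the $e^l\wedge\tilde e^l$-coefficient of $\partial_Ju\wedge\alpha$ is $a_l\alpha'_l-a'_l\alpha_l$ and that of $\partial u\wedge\partial_Ju$ is $|b_l|^2+|b'_l|^2$; moreover, since $u$ is real we have $\partial_Ju=J^{-1}(\overline{\partial u})$, and using that conjugation commutes with $J$ together with $J^2=-\mathrm{Id}$ one gets $a_p=-\overline{b'_p}$, $a'_p=\overline{b_p}$, whence $|b_l|^2+|b'_l|^2=|a_l|^2+|a'_l|^2=:q_l\ge0$. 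A direct computation in this coframe (as in \eqref{n-1}: the off-diagonal monomials $e^a\wedge e^b$, $\tilde e^a\wedge\tilde e^b$, $e^a\wedge\tilde e^b$ with $a\neq b$ are annihilated upon wedging with $\prod_{p\neq m}(e^p\wedge\tilde e^p)$, and $\sum_l\sigma_{i-1}(w|l)=(n-i+1)\sigma_{i-1}(w)$) then shows that the three quotients in \eqref{Le2} equal one and the same positive constant $c_i$, depending only on $i$ and $n$, times
\[
\sum_{l=1}^{n}(a_l\alpha'_l-a'_l\alpha_l)\,\sigma_{i-1}(w|l),\qquad \sum_{l=1}^{n}q_l\,\sigma_{i-1}(w|l),\qquad \sum_{l=1}^{n}\sigma_{i-1}(w|l),
\]
respectively.

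To conclude, observe that $w\in\Gamma_i$ forces $w|l\in\Gamma_{i-1}$ by Proposition \ref{P-2}, hence $\sigma_{i-1}(w|l)>0$ for every $l$ (this is trivial when $i=1$). Cancelling the common factor $c_i>0$, inequality \eqref{Le2} therefore reduces to
\[
\Big|\sum_{l=1}^{n}(a_l\alpha'_l-a'_l\alpha_l)\,\sigma_{i-1}(w|l)\Big|\le\frac{C}{\delta}\sum_{l=1}^{n}q_l\,\sigma_{i-1}(w|l)+C\delta\sum_{l=1}^{n}\sigma_{i-1}(w|l).
\]
Since $\alpha$ is a fixed smooth form and the adapted coframes are $\Omega$-orthonormal, $|\alpha_l|,|\alpha'_l|\le C(\alpha):=\sup_M|\alpha|_{\Omega}<\infty$, so for each $l$
\[
|a_l\alpha'_l-a'_l\alpha_l|\le C(\alpha)\big(|a_l|+|a'_l|\big)\le\sqrt{2}\,C(\alpha)\,q_l^{1/2}\le\frac{C}{\delta}\,q_l+C\delta
\]
by the elementary inequality $2xy\le\delta^{-1}x^2+\delta y^2$. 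Multiplying by $\sigma_{i-1}(w|l)>0$ and summing over $l$ gives the estimate, with $C$ depending only on $n$ and $\sup_M|\alpha|_{\Omega}$.

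The only genuine work is the coframe bookkeeping of the second paragraph, and within it the point most needing care is verifying that the diagonal part of $\partial u\wedge\partial_Ju$ in the adapted coframe is the \emph{nonnegative} quantity $q_l=|a_l|^2+|a'_l|^2$, since this is exactly what lets the indefinite cross terms $a_l\alpha'_l-a'_l\alpha_l$ be absorbed into the two nonnegative terms on the right. Once $W$ --- and hence each $W|l$ --- has been diagonalized with the positive weights $\sigma_{i-1}(w|l)$ supplied by Proposition \ref{P-2}, the remainder is just a termwise Young inequality.
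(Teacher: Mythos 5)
Your proposal is correct and follows essentially the same route as the paper: simultaneous diagonalization of $\Omega$ and $W$ via Lemma \ref{3}, expansion of $\partial u$, $\partial_J u$ and $\alpha$ in the adapted coframe, identification of the diagonal coefficients as weighted sums against $\sigma_{i-1}(w|l)>0$, and a termwise Cauchy--Schwarz/Young inequality. The only differences are notational (your $a_l,a'_l$ versus the paper's $\overline{u_{2l-1}},-\overline{u_{2l}}$), so no further comment is needed.
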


\begin{proof}
Using Lemma \ref{3}, for any $z \in M$, we can choose a unit orthonormal
basis $e_1, \overline{e_1}J, ..., e_n, \overline{e_n}J$ of $T^{1, 0}_{z}M$ such that
\begin{eqnarray*}
\Omega=e^1\wedge J^{-1}(\overline{e^1})+...+e^n\wedge J^{-1}(\overline{e^n})
\end{eqnarray*}
and
\begin{eqnarray*}
W=w_1e^1\wedge J^{-1}(\overline{e^1})+...+w_ne^n\wedge J^{-1}(\overline{e^n}).
\end{eqnarray*}
Let us decompose
\begin{eqnarray*}
\alpha=\sum_{j=1}^{n}a_{2j-1}e^j+a_{2j} J^{-1}(\overline{e^j}), \quad
\partial u=\sum_{j=1}^{n}u_{2j-1}e^j+u_{2j} J^{-1}(\overline{e^j}).
\end{eqnarray*}
Thus,
\begin{eqnarray*}
\partial_J u=J^{-1}(\overline{\partial} u)=J^{-1}(\overline{\partial u})
=\sum_{j=1}^{n}\overline{u_{2j-1}}J^{-1}(\overline{e^j})-\overline{u_{2j}}e^j.
\end{eqnarray*}
One can easily check that
\begin{eqnarray*}
W^{i-1}\wedge \Omega^{n-i+1}=\frac{1}{C_{n}^{i-1}}\sigma_{i-1}(W)\Omega^n
\end{eqnarray*}
and
\begin{eqnarray*}
&&\partial u\wedge \partial_J u\wedge W^{i-1}\wedge \Omega^{n-i}
\\&=&\frac{(i-1)!(n-i)!}{n!}\sum_{1\leq j_1<...<j_{i-1}\leq n}\bigg(\sum_{l \notin \{j_1, ..., j_{i-1}\}}|u_{2l-1}|^2+|u_{2l}|^2\bigg)w_{j_1}....w_{j_{i-1}}\Omega^n
\\&=&\frac{(i-1)!(n-i)!}{n!}\sum_{l=1}^{n}(|u_{2l-1}|^2+|u_{2l}|^2)\sigma_{i-1}(W|l)\Omega^n.
\end{eqnarray*}
Moreover,
\begin{eqnarray*}
&&\partial_J u\wedge \alpha\wedge W^{i-1}\wedge \Omega^{n-i}
\\&=&\frac{(i-1)!(n-i)!}{n!}\sum_{1\leq j_1<...<j_{i-1}\leq n}\bigg(\sum_{l \notin \{j_1, ..., j_{i-1}\}}-\overline{u_{2l-1}}a_{2l-1}-\overline{u_{2l}}a_{2l}\bigg)w_{j_1}....w_{j_{i-1}}\Omega^n
\\&=&\frac{(i-1)!(n-i)!}{n!}\sum_{l=1}^{n}\Big(-\overline{u_{2l-1}}a_{2l-1}-\overline{u_{2l}}a_{2l}\Big)\sigma_{i-1}(W|l)\Omega^n.
\end{eqnarray*}
Noting that $\sigma_{i-1}(W|l)>0$ in view of $W \in \Gamma_i(M)$, then
the conclusion follows directly from Cauchy-Schwartz inequality.
\end{proof}

Next, we derive the following two important inequalities.

\begin{lemma}
Suppose that
$\Omega_0$ satisfies
\eqref{gama}, then we have the following inequalities for $1\leq i< k$:
\begin{eqnarray}\label{C00-5}
\epsilon\int_{0}^{a}\Omega_{tu}^{i-1}\wedge \Omega^{n-i+1}dt
\leq\frac{k}{i} \int_{0}^{a}\Omega_{tu}^{k-1}\wedge \Omega^{n-k+1}dt,
\end{eqnarray}
where $a$ is arbitrary positive constant and
\begin{eqnarray}\label{C0-le-5}
&&\epsilon^{k-i}
\int_{0}^{\frac{1}{2}}dt\int_{M}e^{-pu}\partial u\wedge \partial_J u\wedge\Omega_{tu}^{i-1}\wedge \Omega^{n-i}\wedge \overline{\Omega^{n}}\nonumber\\&\leq& \frac{k}{i}\int_{0}^{\frac{1}{2}}dt\int_{M}e^{-pu}\partial u\wedge \partial_J u\wedge\Omega_{tu}^{k-1}\wedge \Omega^{n-k}\wedge \overline{\Omega^{n}}.
\end{eqnarray}
\end{lemma}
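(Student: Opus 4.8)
The two estimates \eqref{C00-5} and \eqref{C0-le-5} are of the same shape — a ``degree $i-1$'' quantity bounded by the corresponding ``degree $k-1$'' quantity with constant $\tfrac{k}{i}$ — and I would prove both by one mechanism: a one–step comparison of consecutive degrees $m-1$ and $m$, iterated for $m=i,i+1,\dots,k-1$. Since $\prod_{m=i}^{k-1}\tfrac{m+1}{m}=\tfrac{k}{i}$, the constant $\tfrac{k}{i}$ then appears automatically. To treat the two cases in parallel, write, for $0\le m\le n-1$,
\[
g_m(t):=\Omega_{tu}^{m}\wedge\Omega^{n-m}
\qquad\text{or}\qquad
g_m(t):=\int_M e^{-pu}\,\partial u\wedge\partial_J u\wedge\Omega_{tu}^{m}\wedge\Omega^{n-1-m}\wedge\overline{\Omega^{n}} ,
\]
and let $p_m(t)$ denote the same expression with one factor $\Omega_{tu}$ replaced by $\Omega_0$.

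The starting point is the elementary identity $t\,\partial\partial_J u=\Omega_{tu}-\Omega_0$, which combined with $\tfrac{d}{dt}\Omega_{tu}^m=m\,\Omega_{tu}^{m-1}\wedge\partial\partial_J u$ gives
\[
\tfrac{d}{dt}\big(t\,g_m(t)\big)=(m+1)\,g_m(t)-m\,p_m(t).
\]
Integrating over $t\in[0,a]$ (respectively $t\in[0,\tfrac{1}{2}]$) and discarding the endpoint term $a\,g_m(a)\ge 0$ (respectively $\tfrac{1}{2}\,g_m(\tfrac{1}{2})\ge 0$) — nonnegative because $\Omega_{au}\in\Gamma_k(M)$ forces $\sigma_m(\Omega_{au})>0$ for $m\le k$, while $\partial u\wedge\partial_J u\wedge(\,\cdot\,)\wedge\overline{\Omega^{n}}$ is a nonnegative measure by the pointwise formula from the proof of the preceding lemma — yields
\[
m\int_0^{a} p_m(t)\,dt\ \le\ (m+1)\int_0^{a} g_m(t)\,dt ,\qquad i\le m\le k-1 .
\]

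Next one absorbs $\Omega_0$ into $\epsilon\Omega$. By \eqref{gama}, $\Omega_0-\epsilon\Omega\in\Gamma_k(M)$, and I claim $p_m(t)\ge \epsilon\,g_{m-1}(t)$ pointwise on $M$; this amounts to
\[
\Omega_{tu}^{m-1}\wedge(\Omega_0-\epsilon\Omega)\wedge\Omega^{n-m}\ \ge\ 0
\]
in the unweighted case and to the analogous inequality with the extra factor $\partial u\wedge\partial_J u\wedge(\,\cdot\,)\wedge\overline{\Omega^{n}}$ in the weighted case. For the unweighted inequality, choosing (via Lemma \ref{3}) a basis diagonalizing $\Omega_{tu}$ turns the left–hand side into $\sum_a(\Omega_0-\epsilon\Omega)_{a\bar a}\,\sigma_{m-1}(\lambda(\Omega_{tu})\mid a)$, which is positive by G{\aa}rding's inequality \eqref{G-ii} applied with $\lambda(\Omega_{tu}),\lambda(\Omega_0-\epsilon\Omega)\in\Gamma_k\subseteq\Gamma_m$ (legitimate since $m\le k-1<k$). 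For the weighted inequality one expands as in the preceding lemma, now multilinearly in the $(2,0)$–factors; deleting the $l$-th row and column keeps the relevant minors in $\Gamma_{k-1}\subseteq\Gamma_m$ by Proposition \ref{P-2} ($m\le k-1$), so \eqref{G-ii} again makes the coefficient of every $|u_{2l-1}|^2+|u_{2l}|^2$ nonnegative. Substituting $p_m\ge\epsilon\,g_{m-1}$ into the displayed inequality above gives the one–step estimate
\[
\epsilon\int_0^{a} g_{m-1}(t)\,dt\ \le\ \frac{m+1}{m}\int_0^{a} g_m(t)\,dt ,\qquad i\le m\le k-1 ,
\]
and chaining it from $m=i$ to $m=k-1$, then unwinding the definition of $g_m$, produces \eqref{C00-5} (respectively \eqref{C0-le-5}).

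I expect the main obstacle to be the weighted positivity $p_m\ge\epsilon\,g_{m-1}$. When $\Omega_0-\epsilon\Omega$ is not simultaneously diagonalizable with $\Omega_{tu}$, its off–diagonal entries contribute cross terms to $\partial u\wedge\partial_J u\wedge\Omega_{tu}^{m-1}\wedge(\Omega_0-\epsilon\Omega)\wedge\Omega^{n-1-m}$, and one must verify that they recombine into a positive–semidefinite quadratic form in the components of $\partial u$ — equivalently, that the ``$\partial u\wedge\partial_J u$–twisted'' mixed symmetric expression is nonnegative on tuples from $\Gamma_k(M)$, a weighted version of \eqref{G-ii}. Once this is in place, the rest is only the one–line identity for $\tfrac{d}{dt}(t\,g_m)$, the nonnegativity of the discarded endpoint terms, and the bookkeeping that keeps the number of $\Gamma_k(M)$–factors within the range allowed by \eqref{G-ii}.
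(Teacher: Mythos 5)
Your proof is correct and is essentially the paper's own argument: the same one-step identity (which the paper writes pointwise as $\Omega_{tu}^{m}\wedge\Omega^{n-m}\ge\epsilon\,\Omega_{tu}^{m-1}\wedge\Omega^{n-m+1}+\frac{t}{m}\frac{d}{dt}\Omega_{tu}^{m}\wedge\Omega^{n-m}$, i.e.\ your $\tfrac{d}{dt}(t\,g_m)=(m+1)g_m-mp_m$ combined with $p_m\ge\epsilon g_{m-1}$), the same integration by parts with the nonnegative endpoint term discarded, and the same telescoping of the factors $\frac{m+1}{m}$ into $\frac{k}{i}$, applied in parallel to the unweighted and the $e^{-pu}\,\partial u\wedge\partial_J u$-weighted quantities. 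The weighted positivity you single out as the main obstacle is a genuine subtlety, but it is equally present in — and equally glossed over by — the paper, which simply cites \eqref{G-ii}; what is actually needed there is the fully polarized (two distinct arguments) form of G{\aa}rding's inequality for $\sigma_m$ evaluated on $\partial u\wedge\partial_J u\in\overline{\Gamma_n}$, $\Omega_{tu}\in\Gamma_k$ and $\Omega_0-\epsilon\Omega\in\Gamma_k$, which does hold, so your plan closes in the same way the paper's does.
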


\begin{proof}
Using integration by parts, \eqref{gama} and the inequality \eqref{G-ii}
\begin{eqnarray*}
&&\int_{0}^{a}\Omega_{tu}^{i-1}\wedge \Omega^{n-i+1}dt
\\&\geq&\epsilon \int_{0}^{a}\Omega_{tu}^{i-2}\wedge \Omega^{n-i+2}dt
+\frac{1}{i-1}\int_{0}^{a}t\frac{d}{d t}\Omega_{tu}^{i-1}\wedge \Omega^{n-i+1}dt
\\&=&\epsilon \int_{0}^{a}\Omega_{tu}^{i-2}\wedge \Omega^{n-i+2}dt+
\frac{a}{i-1}\Omega_{au}^{i-1}\wedge \Omega^{n-i+1}
-\frac{1}{i-1}\int_{0}^{a}\Omega_{tu}^{i-1}\wedge \Omega^{n-i+1}dt.
\end{eqnarray*}
Hence,
\begin{eqnarray*}
\frac{i}{i-1}\int_{0}^{a}\Omega_{tu}^{i-1}\wedge \Omega^{n-i+1}dt
\geq\epsilon \int_{0}^{a}\Omega_{tu}^{i-2}\wedge \Omega^{n-i+2}dt.
\end{eqnarray*}
Then, the first inequality follows by iteration.
The proof of the second inequality is similar to that of the first one.
Using integration by parts, \eqref{gama} and the inequality \eqref{G-ii}, it yields
\begin{eqnarray*}
&&\int_{0}^{\frac{1}{2}}dt\int_{M}e^{-pu}\partial u\wedge \partial_J u\wedge\Omega_{tu}^{i-1}\wedge \Omega^{n-i}
\wedge \overline{\Omega^{n}}\\&\geq&
\epsilon\int_{0}^{\frac{1}{2}}dt\int_{M}e^{-pu}\partial u\wedge \partial_J u\wedge\Omega_{tu}^{i-2}\wedge \Omega^{n-i+1}\wedge \overline{\Omega^{n}}
\\&&+\frac{1}{i-1}\int_{0}^{\frac{1}{2}}dt\int_{M}e^{-pu}\partial u\wedge \partial_J u\wedge t\frac{d}{dt}(\Omega_{tu}^{i-1}\wedge \Omega^{n-i})\wedge \overline{\Omega^{n}}\\&\geq&
\epsilon\int_{0}^{\frac{1}{2}}dt\int_{M}e^{-pu}\partial u\wedge \partial_J u\wedge\Omega_{tu}^{i-2}\wedge \Omega^{n-i+1}\wedge \overline{\Omega^{n}}
\\&&-\frac{1}{i-1}\int_{0}^{\frac{1}{2}}dt\int_{M}e^{-pu}\partial u\wedge \partial_J u\wedge \Omega_{tu}^{i-1}\wedge \Omega^{n-i}\wedge \overline{\Omega^{n}}.
\end{eqnarray*}
Thus,
\begin{eqnarray*}
&&\frac{i}{i-1}\int_{0}^{\frac{1}{2}}dt\int_{M}e^{-pu}\partial u\wedge \partial_J u\wedge\Omega_{tu}^{i-1}\wedge \Omega^{n-i}\wedge \overline{\Omega^{n}}\\&\geq&
\epsilon\int_{0}^{\frac{1}{2}}dt\int_{M}e^{-pu}\partial u\wedge \partial_J u\wedge\Omega_{tu}^{i-2}\wedge \Omega^{n-i+1}\wedge \overline{\Omega^{n}}
.
\end{eqnarray*}
So, we obtain by iteration
\begin{eqnarray*}
&&\frac{k}{i}\int_{0}^{\frac{1}{2}}dt\int_{M}e^{-pu}\partial u\wedge \partial_J u\wedge\Omega_{tu}^{k-1}\wedge \Omega^{n-k}\wedge \overline{\Omega^{n}}\\&\geq&\epsilon^{k-i}
\int_{0}^{\frac{1}{2}}dt\int_{M}e^{-pu}\partial u\wedge\partial_J u\wedge\Omega_{tu}^{i-1}\wedge \omega^{n-i}\wedge \overline{\Omega^{n}}
,
\end{eqnarray*}
which completes the proof.
\end{proof}

\begin{lemma}
Suppose that
$\Omega_0$ satisfies
\eqref{gama}, then we have the following inequality:
\begin{eqnarray}\label{C00-8}
&&\int_{0}^{\frac{1}{2}}dt\int_{M}e^{-pu}\Omega_{tu}^{k-1}\wedge \Omega^{n-k+1}\wedge\overline{\Omega^n}
\nonumber\\&\leq&C p \int_{0}^{\frac{1}{2}}dt\int_{M}e^{-pu}
\partial u\wedge \partial_J u\wedge\Omega_{tu}^{k-1}\wedge \Omega^{n-k}\wedge\overline{\Omega^n}
\nonumber \\&&+
C\int_{M}e^{-pu}\Omega^{n}\wedge\overline{\Omega^n}.
\end{eqnarray}
\end{lemma}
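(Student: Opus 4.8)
The plan is to integrate by parts in the $M$-variable so as to write the left-hand side of \eqref{C00-8} as a sum of a genuinely bounded term, of the gradient term on the right of \eqref{C00-8}, and of strictly lower-order quantities $\Omega_{tu}^{j}\wedge\Omega^{n-j+1}$ (with $j<k-1$) and their gradient analogues, which can then be re-absorbed after the $t$-integration by means of \eqref{C00-5} and \eqref{C0-le-5}. The structural fact that makes the integration by parts clean is that on an HKT manifold every $(2,0)$-form occurring here is $\partial$-closed: $\partial\Omega=0$ by definition, $\partial\Omega_0=0$ by hypothesis, and hence $\partial\Omega_{tu}=\partial\Omega_0+t\,\partial(\partial\partial_J u)=0$ since $\partial^2=0$. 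The only object that fails to be $\partial$-closed is $\overline{\Omega^n}$, with $\partial\overline{\Omega^n}=n\,\overline{\Omega^{n-1}}\wedge\partial\overline{\Omega}$, where $\partial\overline{\Omega}=\overline{\overline{\partial}\Omega}$ is a fixed smooth $(1,2)$-form, hence uniformly bounded by the geometry.

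Since $(2j,0)$-forms commute and $\Omega_{tu}-\Omega_0=t\,\partial\partial_J u$, I would first write
\begin{eqnarray*}
\Omega_{tu}^{k-1}\wedge\Omega^{n-k+1}
=\Omega_0^{k-1}\wedge\Omega^{n-k+1}
+t\,\partial\partial_J u\wedge\sum_{j=0}^{k-2}\Omega_{tu}^{j}\wedge\Omega_0^{k-2-j}\wedge\Omega^{n-k+1},
\end{eqnarray*}
wedge with $e^{-pu}\,\overline{\Omega^n}$, and integrate over $M\times[0,\tfrac12]$. From \eqref{gama} one has $\epsilon\,\Omega\le\Omega_0\le\epsilon^{-1}\Omega$, so $\Omega_0$ is uniformly equivalent to $\Omega$ (in particular positive definite) and $\Omega_0^{k-1}\wedge\Omega^{n-k+1}\le\epsilon^{-(k-1)}\Omega^n$; thus the $\Omega_0^{k-1}$-term contributes at most $C\int_M e^{-pu}\Omega^n\wedge\overline{\Omega^n}$. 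For each $j$ I would then integrate the remaining term by parts: applying Stokes' theorem to $\partial\bigl(e^{-pu}\,\partial_J u\wedge\xi_j\bigr)$ with $\xi_j:=\Omega_{tu}^{j}\wedge\Omega_0^{k-2-j}\wedge\Omega^{n-k+1}\wedge\overline{\Omega^n}$, and using that a form of bidegree $(2n-1,2n)$ is automatically $\overline{\partial}$-closed, gives
\begin{eqnarray*}
\int_M e^{-pu}\,\partial\partial_J u\wedge\xi_j
=p\int_M e^{-pu}\,\partial u\wedge\partial_J u\wedge\xi_j
+\int_M e^{-pu}\,\partial_J u\wedge\partial\xi_j ,
\end{eqnarray*}
where, by the $\partial$-closedness above, $\partial\xi_j=\pm n\,\Omega_{tu}^{j}\wedge\Omega_0^{k-2-j}\wedge\Omega^{n-k+1}\wedge\overline{\Omega^{n-1}}\wedge\partial\overline{\Omega}$.

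In the first (gradient) integral, bounding $\Omega_0^{k-2-j}\wedge\Omega^{n-k+1}$ above by $C\,\Omega^{n-1-j}$ (possible since $\Omega_0\le\epsilon^{-1}\Omega$; the monotone replacement inside the wedge with $\partial u\wedge\partial_J u\wedge\Omega_{tu}^{j}$ uses the $\sigma$-identities of Section~2), inserting the harmless factor $t\le\tfrac12$, integrating in $t$ and invoking \eqref{C0-le-5} with $i=j+1<k$, one bounds each such contribution by $Cp\int_0^{1/2}\!dt\int_M e^{-pu}\,\partial u\wedge\partial_J u\wedge\Omega_{tu}^{k-1}\wedge\Omega^{n-k}\wedge\overline{\Omega^n}$, i.e. the gradient term of \eqref{C00-8}. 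For the torsion integral I would split the fixed form $\partial\overline{\Omega}=\sum_a\alpha_a\wedge\gamma_a$ with $\alpha_a\in\Lambda^{1,0}_I(M)$, $\gamma_a\in\Lambda^{0,2}_I(M)$ smooth, write each summand as $\bigl(\partial_J u\wedge\alpha_a\wedge\Omega_{tu}^{j}\wedge\Omega_0^{k-2-j}\wedge\Omega^{n-k+1}\bigr)\wedge\bigl(\gamma_a\wedge\overline{\Omega^{n-1}}\bigr)$, and estimate the first, $(2n,0)$-factor by the mixed-wedge analogue of \eqref{Le2}: for every $\delta>0$ it is, up to a uniform constant,
\begin{eqnarray*}
\le\ \tfrac1\delta\,\partial u\wedge\partial_J u\wedge\Omega_{tu}^{j}\wedge\Omega_0^{k-2-j}\wedge\Omega^{n-k+1}
+\delta\,\Omega_{tu}^{j}\wedge\Omega_0^{k-2-j}\wedge\Omega^{n-k+2}.
\end{eqnarray*}
Since $\gamma_a$ is bounded, $\gamma_a\wedge\overline{\Omega^{n-1}}\le C\,\overline{\Omega^n}$; the first piece is then handled exactly as the gradient integral above (and is $\le\tfrac C\delta\,p$ times the gradient term of \eqref{C00-8}), while the second, after $\Omega_{tu}^{j}\wedge\Omega_0^{k-2-j}\wedge\Omega^{n-k+2}\le C\,\Omega_{tu}^{j}\wedge\Omega^{n-j}$ and \eqref{C00-5} with $i=j+1<k$, is at most $C\delta$ times the left-hand side of \eqref{C00-8}. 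Collecting everything and using $\tfrac C\delta\le Cp$ for $p\ge1$, the left-hand side of \eqref{C00-8} is bounded by $C\int_M e^{-pu}\Omega^n\wedge\overline{\Omega^n}+Cp\int_0^{1/2}dt\int_M e^{-pu}\,\partial u\wedge\partial_J u\wedge\Omega_{tu}^{k-1}\wedge\Omega^{n-k}\wedge\overline{\Omega^n}+C\delta\cdot(\text{left-hand side})$; choosing $\delta$ with $C\delta\le\tfrac12$ and absorbing yields \eqref{C00-8}.

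The step I expect to be the main obstacle is the torsion estimate: one must arrange the integration by parts so that the only torsion that appears is the bounded form $\partial\overline{\Omega}$ (which is exactly where $\partial\Omega=\partial\Omega_0=\partial\Omega_{tu}=0$ is used), and one must check that the mixed wedge products $\Omega_{tu}^{j}\wedge\Omega_0^{k-2-j}\wedge\Omega^{n-k+1}$ still carry the positivity structure behind \eqref{Le2} — namely that the minors replacing $\sigma_{i-1}(W|l)$ remain non-negative — so that the Cauchy--Schwarz split into the $\tfrac1\delta$- and $\delta$-pieces is legitimate; this relies on the G{\aa}rding-cone positivity properties recorded in Section~2, and on $\Omega_0,\Omega_{tu}\in\Gamma_k(M)$ together with the surplus of positive $\Omega$-factors present in these products.
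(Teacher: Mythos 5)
Your overall strategy coincides with the paper's: isolate $\partial\partial_J u$ from $\Omega_{tu}^{k-1}-\Omega_0^{k-1}$, integrate by parts over $M$ to trade it for $p\,\partial u\wedge\partial_J u$ plus a torsion term involving $\partial\overline{\Omega^n}$, estimate the torsion term by a Cauchy--Schwarz splitting as in \eqref{Le2}, climb back up to the exponent $k-1$ via \eqref{C00-5} and \eqref{C0-le-5}, and absorb the resulting small multiple of the left-hand side. The paper, however, extracts $\partial\partial_J u$ through the identity $\Omega_{tu}^{k-1}\wedge\Omega^{n-k+1}=\Omega_0^{k-1}\wedge\Omega^{n-k+1}+(k-1)\int_0^t\partial\partial_J u\wedge\Omega_{su}^{k-2}\wedge\Omega^{n-k+1}\,ds$, so every wedge product it must estimate is of the form $\Omega_{su}^{i}\wedge\Omega^{n-i}$ or $\partial u\wedge\partial_J u\wedge\Omega_{su}^{i}\wedge\Omega^{n-i-1}$ --- two q-real forms only, which Lemma \ref{3} diagonalizes simultaneously, so \eqref{Le2} and \eqref{G-ii} apply verbatim. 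Your algebraic telescoping instead produces the three-form mixtures $\Omega_{tu}^{j}\wedge\Omega_0^{k-2-j}\wedge\Omega^{n-k+1}$, and this is where your argument has a genuine gap.

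Two specific problems. First, you justify $\Omega_0^{k-1}\wedge\Omega^{n-k+1}\le\epsilon^{-(k-1)}\Omega^n$ and the replacement of $\Omega_0$ by $\epsilon^{-1}\Omega$ inside the wedge by claiming that \eqref{gama} makes $\Omega_0$ positive definite and uniformly equivalent to $\Omega$; but $\Omega-\epsilon\Omega_0\in\Gamma_k(M)$ only controls $\sigma_1,\dots,\sigma_k$, and for $k<n$ membership in $\Gamma_k$ does not imply positive definiteness or any pointwise eigenvalue ordering. (The bound on the pure $\Omega_0^{k-1}$ term is still true, since $\sigma_{k-1}(\Omega_0)$ is a fixed bounded positive function on a compact manifold, but not for the reason you give.) Second, and more seriously, the nonnegativity of the ``minors'' underlying the Cauchy--Schwarz split in \eqref{Le2}, and the domination of $\Omega_0^{k-2-j}$ by $C\,\Omega^{k-2-j}$ inside $\partial u\wedge\partial_J u\wedge\Omega_{tu}^{j}\wedge\Omega_0^{k-2-j}\wedge\Omega^{n-k+1}$, are not consequences of anything proved in Section~2: the proof of \eqref{Le2} rests on simultaneously diagonalizing the two forms $W$ and $\Omega$ via Lemma \ref{3}, which is impossible for the triple $\Omega_{tu},\Omega_0,\Omega$. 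What you actually need is the multilinear G{\aa}rding theory for the polarized $\sigma_m$ --- positivity of the completely polarized form on $m$-tuples from $\Gamma_m$ and its monotonicity in each slot, applied also to the deleted matrices $A|l\in\Gamma_{k-1}$ furnished by Proposition \ref{P-2}. Those facts are true and would close the gap, but they constitute additional machinery that you would have to state and prove; the paper's choice of decomposition is designed precisely to avoid it.
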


\begin{proof}
According to the inequality \eqref{C0-le-5}, we have
\begin{eqnarray}\label{C0-5}
&&\int_{0}^{\frac{1}{2}}dt\int_{M}e^{-pu}\Omega_{tu}^{k-1}\wedge \Omega^{n-k+1}\wedge\overline{\Omega^n}
\nonumber \\ \nonumber&=&p(k-1)\int_{0}^{\frac{1}{2}}dt\int_{0}^{t}ds\int_{M}e^{-pu}
\partial u\wedge \partial_J u\wedge\Omega_{su}^{k-2}\wedge \Omega^{n-k+1}\wedge\overline{\Omega^n}
\\ \nonumber&&-(k-1)\int_{0}^{\frac{1}{2}}dt\int_{0}^{t}ds\int_{M}e^{-pu}
\partial_J u\wedge\Omega_{su}^{k-2}\wedge \Omega^{n-k+1}\wedge\partial\overline{\Omega^n}
\nonumber \\&&+
\frac{1}{2}\int_{M}e^{-pu}\Omega_{0}^{k-1}\wedge \Omega^{n-k+1}\wedge\overline{\Omega^n}\nonumber \\&\leq&\frac{p(k-1)}{2}\int_{0}^{\frac{1}{2}}dt\int_{M}e^{-pu}
\partial u\wedge \partial_J u\wedge\Omega_{tu}^{k-2}\wedge \Omega^{n-k+1}\wedge\overline{\Omega^n}\nonumber \\&&+(k-1)\int_{0}^{\frac{1}{2}}dt\int_{M}e^{-pu}
\bigg|\partial_J u\wedge\Omega_{tu}^{k-2}\wedge \Omega^{n-k+1}\wedge\partial\overline{\Omega^n}\bigg|
\nonumber \\&&+
\frac{C}{2}\int_{M}e^{-pu}\Omega^{n}\wedge\overline{\Omega^n}.
\end{eqnarray}
Denote by $\partial \overline{\Omega^n}=\beta \wedge \overline{\Omega^n}$ for some $(1, 0)$ form $\beta$.
Applying the inequality \eqref{Le2}, we find
\begin{eqnarray}\label{DD-1}
&&\bigg| \frac{\partial_J u\wedge \beta\wedge\Omega_{tu}^{k-2}\wedge \Omega^{n-k+1}}{\Omega^n}\bigg|
\nonumber\\&\leq&\frac{p(k-1)}{2}\frac{\partial u\wedge \partial_J u\wedge \Omega_{tu}^{k-2}\wedge \Omega^{n-k+1}}{\Omega^n}
+\frac{C}{p} \frac{\Omega_{tu}^{k-2}\wedge \Omega^{n-k+2}}{\Omega^n}.
\end{eqnarray}
Substituting \eqref{DD-1} into \eqref{C0-5} and applying \eqref{C0-le-5} give
\begin{eqnarray}\label{C0-5}
&&\int_{0}^{\frac{1}{2}}dt\int_{M}e^{-pu}\Omega_{tu}^{k-1}\wedge \Omega^{n-k+1}\wedge\overline{\Omega^n}
\nonumber\\&\leq&p(k-1)\int_{0}^{\frac{1}{2}}dt\int_{M}e^{-pu}
\partial u\wedge \partial_J u\wedge\Omega_{tu}^{k-2}\wedge \Omega^{n-k+1}\wedge\overline{\Omega^n}\nonumber \\&&+\frac{C}{p}\int_{0}^{\frac{1}{2}}dt\int_{M}e^{-pu}
\Omega_{tu}^{k-2}\wedge \Omega^{n-k+2}\wedge\overline{\Omega^n}
\nonumber \\&&+
\frac{C}{2}\int_{M}e^{-pu}\Omega^{n}\wedge\overline{\Omega^n}\nonumber\\&\leq&p C\int_{0}^{\frac{1}{2}}dt\int_{M}e^{-pu}
\partial u\wedge \partial_J u\wedge\Omega_{tu}^{k-1}\wedge \Omega^{n-k}\wedge\overline{\Omega^n}\nonumber \\&&+\frac{C}{p}\int_{0}^{\frac{1}{2}}dt\int_{M}e^{-pu}
\Omega_{tu}^{k-1}\wedge \Omega^{n-k+1}\wedge\overline{\Omega^n}
\nonumber \\&&+
\frac{C}{2}\int_{M}e^{-pu}\Omega^{n}\wedge\overline{\Omega^n}.
\end{eqnarray}
where we use the inequality \eqref{C00-5} for $a=\frac{1}{2}$ to get the last inequality. Choosing $p$ sufficiently large, we find
\begin{eqnarray*}
&&\int_{0}^{\frac{1}{2}}dt\int_{M}e^{-pu}\Omega_{tu}^{k-1}\wedge \Omega^{n-k+1}\wedge\overline{\Omega^n}
\nonumber\\&\leq&C p \int_{0}^{\frac{1}{2}}dt\int_{M}e^{-pu}
\partial u\wedge \partial_J u\wedge\Omega_{tu}^{k-1}\wedge \Omega^{n-k}\wedge\overline{\Omega^n}
\nonumber \\&&+
C\int_{M}e^{-pu}\Omega^{n}\wedge\overline{\Omega^n}.
\end{eqnarray*}
\end{proof}

\subsection{The proof of $C^{0}$ estimate}

According to the argument of Page 13-14 in \cite{Sro21}, in order to prove Theorem \ref{Main}, it suffices to show
the following Cherrier type inequality.

\begin{lemma}
Under the assumptions in Theorem \ref{Main}, we have for $p$ large enough
\begin{eqnarray*}
\int_{M}|\partial e^{-\frac{p}{2}u}|^2(\Omega\wedge\overline{\Omega})^n\leq C p\int_{M}e^{-pu}(\Omega\wedge\overline{\Omega})^n.
\end{eqnarray*}
\end{lemma}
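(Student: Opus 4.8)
The plan is to establish the Cherrier type inequality by differentiating the quantity $\int_M e^{-pu}\,(\Omega\wedge\overline\Omega)^n$ along the path $t\mapsto tu$ and using the structural inequalities \eqref{C0-le-3}, \eqref{C0-le-4} together with the integration-by-parts estimates \eqref{C00-8} and \eqref{C0-le-5}. Concretely, I would start from the elementary identity
\begin{eqnarray*}
e^{-pu}-1=\int_0^1 \frac{d}{dt}e^{-ptu}\,dt=-p\int_0^1 u\,e^{-ptu}\,dt,
\end{eqnarray*}
and more usefully from the telescoping of $\Omega_{tu}^{k}\wedge\Omega^{n-k}$ in $t$: since $\frac{d}{dt}\Omega_{tu}=\partial\partial_J u$, we have
\begin{eqnarray*}
\frac{d}{dt}\big(\Omega_{tu}^{k}\wedge\Omega^{n-k}\big)=k\,\partial\partial_J u\wedge\Omega_{tu}^{k-1}\wedge\Omega^{n-k},
\end{eqnarray*}
and similarly for the $l$-th power. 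The idea is to multiply $e^{-pu}\wedge\overline{\Omega^n}$ (or rather the appropriate $(2n,0)$-form) by the \emph{difference} appearing on the left of \eqref{C0-le-3}, integrate in $t$ from $0$ to $\tfrac12$ and over $M$, and then integrate by parts twice in the $\partial\partial_J u$ factor to move derivatives onto $e^{-pu}$, producing the gradient term $\partial u\wedge\partial_J u$ with a factor $p$, plus lower-order terms controlled by \eqref{C00-8}, \eqref{Le2} and the torsion terms $\partial\overline{\Omega^n}=\beta\wedge\overline{\Omega^n}$.

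First I would write, using the equation \eqref{Keq-1} in the form \eqref{K-eq}, that $\Omega_u^{k}\wedge\Omega^{n-k}=e^F\Omega_u^{l}\wedge\Omega^{n-l}$; combined with the fundamental theorem of calculus in $t$ this lets me compare $\Omega_{tu}^{k}\wedge\Omega^{n-k}-e^F\Omega_{tu}^{l}\wedge\Omega^{n-l}$ at $t=\tfrac12$ and $t=1$ with an integral of the left-hand side of \eqref{C0-le-3}. Then I would multiply through by $e^{-pu}\wedge\overline{\Omega^n}$, integrate over $M\times[0,\tfrac12]$, and integrate by parts: the $k\,\partial\partial_J u\wedge\Omega_{tu}^{k-1}\wedge\Omega^{n-k}$ term yields, after two integrations by parts, a main term $+p^2\int e^{-pu}\,\partial u\wedge\partial_J u\wedge\Omega_{tu}^{k-1}\wedge\Omega^{n-k}\wedge\overline{\Omega^n}$ with the correct sign (because $\partial_J u\wedge\partial u$ pairs positively), plus terms with $\partial\overline{\Omega^n}$ and $\partial\Omega=0$ (the HKT condition kills the would-be worst torsion term), which by \eqref{Le2} are absorbed by the main term at the cost of enlarging $p$; the $l$-term is handled identically and, crucially, the lower bound \eqref{C0-le-3} guarantees that after subtraction the net coefficient of the gradient term stays positive. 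The leftover volume terms are exactly of the form appearing on the right of \eqref{C00-8}, so that lemma closes the loop, and \eqref{C0-le-4} provides the positivity needed to discard the $t=\tfrac12$ boundary contribution in the right direction.

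The main obstacle I anticipate is bookkeeping the signs and the torsion terms through the double integration by parts: on a genuine HKT (non-hyperK\"ahler) manifold $\overline\partial\Omega\neq0$, so differentiating $\overline{\Omega^n}$ produces $\beta$-terms that are only controlled after invoking \eqref{Le2} with a small parameter $\delta\sim 1/p$, and one must check that \emph{every} such term is genuinely lower order relative to $p^2\int e^{-pu}\,\partial u\wedge\partial_J u\wedge\Omega_{tu}^{k-1}\wedge\Omega^{n-k}\wedge\overline{\Omega^n}$ and not merely of the same order. The second delicate point is that the mixed $k$- versus $l$-power structure forces one to compare $\Omega_{tu}^{k-1}\wedge\Omega^{n-k}$ against $\Omega_{tu}^{l-1}\wedge\Omega^{n-l}$ gradient integrals; this is precisely what \eqref{C0-le-5} is designed for, and its $\epsilon^{k-l}$ factor must be tracked so that the positive coefficient surviving from \eqref{C0-le-3} dominates. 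Once these absorptions are arranged, rewriting $p^2 e^{-pu}\partial u\wedge\partial_J u=4|\partial e^{-\frac{p}{2}u}|^2$ up to the metric normalization and discarding the remaining nonnegative terms yields the claimed estimate $\int_M|\partial e^{-\frac p2 u}|^2(\Omega\wedge\overline\Omega)^n\le Cp\int_M e^{-pu}(\Omega\wedge\overline\Omega)^n$.
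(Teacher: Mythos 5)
Your overall scheme---the fundamental theorem of calculus in $t$ applied to $\Omega_{tu}^{k}\wedge\Omega^{n-k}-e^{F}\Omega_{tu}^{l}\wedge\Omega^{n-l}$, multiplication by $e^{-pu}\wedge\overline{\Omega^n}$, integration by parts, absorption of the torsion and $\partial e^F$ terms via \eqref{Le2}, positivity from \eqref{C0-le-3} and \eqref{C0-le-4}, and the auxiliary estimates \eqref{C00-5}, \eqref{C0-le-5}, \eqref{C00-8}---is exactly the paper's. But two structural choices in your outline would make the argument fail as written. The more serious one is the handling of the $t$-endpoints. You propose to run the FTC identity with an endpoint at $t=\tfrac12$ (you variously compare at $t=\tfrac12$ and $t=1$, and integrate over $[0,\tfrac12]$) and to ``discard the $t=\tfrac12$ boundary contribution'' by positivity. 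The boundary term $\Omega_{u/2}^{k}\wedge\Omega^{n-k}-e^{F}\Omega_{u/2}^{l}\wedge\Omega^{n-l}$ involves the unknown quaternionic Hessian of $u$, and the concavity inequalities \eqref{C000-1}--\eqref{C000-3} and \eqref{C0-le-2} only furnish \emph{lower} bounds on $\sigma_i(\Omega_{tu})$ in terms of $\sigma_i(\Omega_u)$; to discard that boundary term you would need an a priori \emph{upper} bound on $\sigma_k(\Omega_{u/2})$ (or on $\sigma_l(\Omega_{u/2})$, depending on which side it lands), which is not available before the $C^2$ estimate. The identity only closes over the full interval $[0,1]$, as in \eqref{C0-1}: there the $t=1$ boundary term vanishes \emph{exactly} because of the equation \eqref{K-eq}, and the $t=0$ boundary term involves only the fixed form $\Omega_0$ and is trivially bounded by $C\int_M e^{-pu}\Omega^{n}\wedge\overline{\Omega^n}$. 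The restriction to $t\in[0,\tfrac12]$ enters only afterwards, inside the gradient and volume integrals, through \eqref{C0-le-2} as in \eqref{C00-9} and \eqref{C00-2}, and in the uniform positivity \eqref{C0-6}; it is never applied to the FTC identity itself.

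The second issue is the claimed main term. Only one integration by parts (in $\partial$) is performed on $\partial\partial_J u$, producing a single factor of $p$ in $p\int e^{-pu}\partial u\wedge\partial_J u\wedge(\cdots)\wedge\overline{\Omega^n}$; a second integration by parts in $\partial_J$ would move the remaining derivative off $u$ and destroy the gradient term rather than square the factor of $p$, so your asserted main term $p^{2}\int e^{-pu}\partial u\wedge\partial_J u\wedge\Omega_{tu}^{k-1}\wedge\Omega^{n-k}\wedge\overline{\Omega^n}$ is not what the computation yields. This is not cosmetic: since $|\partial e^{-\frac{p}{2}u}|^{2}=\tfrac{p^{2}}{4}e^{-pu}|\partial u|^{2}$, the single factor of $p$ in the main term is precisely the reason the final inequality carries $Cp$ on the right-hand side rather than $C$. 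With the $t$-interval corrected to $[0,1]$ and the power of $p$ corrected, your outline coincides with the paper's proof.
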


\begin{proof}

Since
\begin{eqnarray*}
&&(\Omega_{u}^k\wedge \Omega^{n-k}-\Omega_{0}^k\wedge \Omega^{n-k})
-e^{F}(\Omega_{u}^l\wedge \Omega^{n-l}-\Omega_{0}^l\wedge \Omega^{n-l})
\\&=&\int_{0}^{1}\partial\partial_J u\wedge\Big(k\Omega_{tu}^{k-1}\wedge \Omega^{n-k}
-le^{F}\Omega_{tu}^{l-1}\wedge \Omega^{n-l}\Big)dt,
\end{eqnarray*}
we have
\begin{eqnarray}\label{C0-1}
&&\int_{M}e^{-pu}\Big[(\Omega_{u}^{k}\wedge \Omega^{n-k}-\Omega_{0}^k\wedge \omega^{n-k})
\\&&-e^{F}(\Omega_{u}^{l}\wedge \Omega^{n-l}-\Omega_{0}^l\wedge \Omega^{n-l})\Big]\wedge \overline{\Omega^n}
\nonumber\\ \nonumber
&=&p\int_{0}^{1}dt\int_{M}e^{-pu}\partial u\wedge \partial_J u\wedge\Big(k\Omega_{tu}^{k-1}\wedge \Omega^{n-k}
-e^{F}l\Omega_{tu}^{l-1}\wedge \Omega^{n-l}\Big)\wedge \overline{\Omega^n}\\ \nonumber&&
+l\int_{0}^{1}dt\int_{M}e^{-pu}\partial_J u\wedge \partial e^F \wedge\Omega_{tu}^{l-1}\wedge \Omega^{n-l}\wedge \overline{\Omega^n}
\nonumber\\ \nonumber
&&-\int_{0}^{1}dt\int_{M}e^{-pu}\partial_J u\wedge\Big(k\Omega_{tu}^{k-1}\wedge \Omega^{n-k}
-e^{F}l\Omega_{tu}^{l-1}\wedge \Omega^{n-l}\Big)\wedge \partial\overline{\Omega^n}.
\end{eqnarray}
Denote by $\partial \overline{\Omega^n}=\beta \wedge \overline{\Omega^n}$ for some $(1, 0)$ form $\beta$.
Applying the inequality \eqref{Le2}, we find
\begin{eqnarray}\label{D-1}
&&\bigg| \frac{\partial_J u\wedge \beta\wedge \Big(k\Omega_{tu}^{k-1}\wedge \Omega^{n-k}
-e^{F}l\Omega_{tu}^{l-1}\wedge \Omega^{n-l}\Big)}{\Omega^n}\bigg|
\nonumber\\&\leq&\frac{p}{2}\frac{\partial u\wedge \partial_J u\wedge \Big(k\Omega_{tu}^{k-1}\wedge \Omega^{n-k}
-e^{F}l\Omega_{tu}^{l-1}\wedge \Omega^{n-l}\Big)}{\Omega^n}
\nonumber\\&&+\frac{C}{p}\frac{\Omega_{tu}^{k-1}\wedge \Omega^{n-k+1}+\Omega_{tu}^{l-1}\wedge \Omega^{n-l+1}}{\Omega^n}.
\end{eqnarray}
and
\begin{eqnarray}\label{D-2}
&&\bigg| \frac{\partial_J u\wedge \partial e^F\wedge \wedge\Omega_{tu}^{l-2}\wedge \Omega^{n-l+1}}{\Omega^n}\bigg|
\nonumber\\&\leq&\frac{C}{\delta}\frac{\partial u\wedge \partial_J u\wedge \Omega_{tu}^{l-2}\wedge \Omega^{n-l+1}}{\Omega^n}
+C\delta \frac{\Omega_{tu}^{l-2}\wedge \Omega^{n-l+2}}{\Omega^n}.
\end{eqnarray}
Then, plugging \eqref{D-1} and \eqref{D-2} into \eqref{C0-1}, and using \eqref{C00-5}, there is
\begin{eqnarray}\label{C0-2}
&&\int_{M}e^{-pu}\Big[(\Omega_{u}^{k}\wedge \Omega^{n-k}-\Omega_{0}^k\wedge \omega^{n-k})
\nonumber\\&&-e^{F}(\Omega_{u}^{l}\wedge \Omega^{n-l}-\Omega_{0}^l\wedge \Omega^{n-l})\Big]\wedge \overline{\Omega^n}
\nonumber\\ \nonumber
&\geq&\frac{p}{2}\int_{0}^{1}dt\int_{M}e^{-pu}\partial u\wedge \partial_J u\wedge\Big(k\Omega_{tu}^{k-1}\wedge \Omega^{n-k}
-e^{F}\Omega_{tu}^{l-1}\wedge \Omega^{n-l}\Big)\wedge \overline{\Omega^n}\\ \nonumber&&
-\frac{C}{\delta} \int_{0}^{1}dt\int_{M}e^{-pu}\partial u \wedge\partial_J u \wedge\Omega_{tu}^{l-1}\wedge \Omega^{n-l}\wedge \overline{\Omega^n}
\nonumber\\
&&-(\frac{1}{p}+\delta)C\int_{0}^{1}dt\int_{M}e^{-pu}\Omega_{tu}^{k-1}\wedge \Omega^{n-k+1}
\wedge\overline{\Omega^n},
\end{eqnarray}
Furthermore, applying the inequalities \eqref{C000-2} and \eqref{C0-le-5}, we find
\begin{eqnarray}\label{C00-9}
&&\int_{0}^{1}dt\int_{M}e^{-pu}\partial u \wedge\partial_J u\wedge\Omega_{tu}^{l-1}\wedge \Omega^{n-l}\wedge\overline{\Omega^n}
\nonumber\\&\leq&2^{l-1}\int_{0}^{1}dt\int_{M}e^{-pu}\partial u \wedge\partial_J u\wedge\Omega_{\frac{tu}{2}}^{l-1}\wedge \Omega^{n-l}\wedge\overline{\Omega^n}
\nonumber\\ \nonumber&\leq&2^{l-1}\int_{0}^{\frac{1}{2}}dt\int_{M}e^{-pu}\partial u \wedge\partial_J u\wedge\Omega_{tu}^{l-1}\wedge \Omega^{n-l}\wedge\overline{\Omega^n}\nonumber\\&\leq&C\int_{0}^{\frac{1}{2}}dt\int_{M}e^{-pu}\partial u \wedge\partial_J u\wedge\Omega_{tu}^{k-1}\wedge \Omega^{n-k}\wedge\overline{\Omega^n}.
\end{eqnarray}
and
\begin{eqnarray}\label{C00-2}
&&\int_{0}^{1}dt\int_{M}e^{-pu}\Omega_{tu}^{k-1}\wedge \Omega^{n-k+1}\wedge\overline{\Omega^n}
\nonumber\\&\leq&2^{k-1}\int_{0}^{1}dt\int_{M}e^{-pu}\Omega_{\frac{tu}{2}}^{k-1}\wedge \Omega^{n-k+1}\wedge\overline{\Omega^n}
\nonumber\\&\leq&2^{k-1}\int_{0}^{\frac{1}{2}}dt\int_{M}e^{-pu}\Omega_{tu}^{k-1}\wedge \Omega^{n-k+1}\wedge\overline{\Omega^n}\nonumber\\&\leq&C p \int_{0}^{\frac{1}{2}}dt\int_{M}e^{-pu}
\partial u\wedge \partial_J u\wedge\Omega_{tu}^{k-1}\wedge \Omega^{n-k}\wedge\overline{\Omega^n}
\nonumber \\&&+
C\int_{M}e^{-pu}\Omega^{n}\wedge\overline{\Omega^n}.
\end{eqnarray}
Taking the inequalities \eqref{C00-9} and \eqref{C00-2} into \eqref{C0-2}, we have
\begin{eqnarray}\label{C2-2}
&&\int_{M}e^{-pu}\Big[(\Omega_{u}^{k}\wedge \Omega^{n-k}-\Omega_{0}^k\wedge \omega^{n-k})
\\ \nonumber&&-e^{F}(\Omega_{u}^{l}\wedge \Omega^{n-l}-\Omega_{0}^l\wedge \Omega^{n-l})\Big]\wedge \overline{\Omega^n}
\\ \nonumber
&\geq&\frac{p}{2}\int_{0}^{1}dt\int_{M}e^{-pu}\partial u\wedge \partial_J u\wedge\Big(k\Omega_{tu}^{k-1}\wedge \Omega^{n-k}
-e^{F}\Omega_{tu}^{l-1}\wedge \Omega^{n-l}\Big)\wedge \overline{\Omega^n}\\ \nonumber&&
-(\frac{1}{\delta}+1+p\delta)C\int_{0}^{1}dt\int_{M}e^{-pu}\partial u \wedge\partial_J u \wedge\Omega_{tu}^{k-1}\wedge \Omega^{n-k}\wedge \overline{\Omega^n}
\nonumber\\ \nonumber
&&-(\frac{1}{p}+\delta)C\int_{M}e^{-pu}\Omega^{n}\wedge\overline{\Omega^n}.
\end{eqnarray}

To cancel the second term on the right side of \eqref{C2-2}, we will use a part of the first term on the right side of \eqref{C2-2}. In
details, we can get the following positive term for $0\leq t\leq \frac{1}{2}$ from the inequality \eqref{C0-le-3}
\begin{eqnarray}\label{C0-6}
&&pe^{-pu}\partial u\wedge \partial_J u\wedge\Big(k\Omega_{tu}^{k-1}\wedge \Omega^{n-k}
-le^F\Omega_{tu}^{l-1}\wedge \Omega^{n-l}\Big)\nonumber\\&\geq&
Cpe^{-pu}\partial u\wedge \partial_J u\wedge\Omega_{tu}^{k-1}\wedge \Omega^{n-k}.
\end{eqnarray}
Thus, we first choose $\delta$ sufficiently small, and then choose $p$ sufficiently large,
 the integral of the term \eqref{C0-6} on $M$ can
kill the second term on the right side of \eqref{C2-2}. Then, \eqref{C2-2} becomes
\begin{eqnarray*}\label{C1-2}
&&\int_{M}e^{-pu}\Big[(\Omega_{u}^{k}\wedge \Omega^{n-k}-\Omega_{0}^k\wedge \omega^{n-k})
-e^{F}(\Omega_{u}^{l}\wedge \Omega^{n-l}-\Omega_{0}^l\wedge \Omega^{n-l})\Big]\wedge \overline{\Omega^n}
\nonumber\\ \nonumber
&\geq&\frac{p}{4}\int_{0}^{1}dt\int_{M}e^{-pu}\partial u\wedge \partial_J u\wedge\Big(k\Omega_{tu}^{k-1}\wedge \Omega^{n-k}
-e^{F}\Omega_{tu}^{l-1}\wedge \Omega^{n-l}\Big)\wedge \overline{\Omega^n}\\&&-C\int_{M}e^{-pu}\Omega^{n}\wedge\overline{\Omega^n}
\nonumber\\ \nonumber
&\geq&Cp\int_{0}^{1}dt\int_{M}e^{-pu}\partial u\wedge \partial_J u\wedge\Omega^{n-1}\wedge \overline{\Omega^n}\\&&-C\int_{M}e^{-pu}\Omega^{n}\wedge\overline{\Omega^n}.
\end{eqnarray*}
where we use \eqref{C0-le-4} to get the last inequality.
Notice that in view of the equation \eqref{K-eq}
\begin{eqnarray*}
&&\int_{M}e^{-pu}\Big[(\Omega_{u}^{k}\wedge \Omega^{n-k}-\Omega_{0}^k\wedge \Omega^{n-k})
-e^F(\Omega_{u}^{l}\wedge \Omega^{n-l}-\Omega_{0}^l\wedge \Omega^{n-l})\Big]\wedge \overline{\Omega^n}
\\&=& \int_{M}e^{-pu}\Big(-\Omega_{0}^k\wedge \Omega^{n-k}
+e^F\Omega_{0}^l\wedge \Omega^{n-l}\Big)\wedge \overline{\Omega^n}\nonumber \\ \nonumber&\leq&C\int_{M}e^{-pu}\Omega^{n}\wedge \overline{\Omega^n}.
\end{eqnarray*}
Thus,
\begin{eqnarray*}\label{C1-2}
C\int_{M}e^{-pu}\Omega^{n}\wedge\overline{\Omega^n}
\geq p\int_{M}e^{-pu}\partial u\wedge \partial_J u\wedge\Omega^{n-1}
\wedge \overline{\Omega^n}.
\end{eqnarray*}
So, our proof is completed.
\end{proof}

 \bibliographystyle{siam}

\end{document}